\newtheorem{theorem}{Theorem}[section]
\newtheorem{lemma}[theorem]{Lemma}
\theoremstyle{definition}
\newtheorem{example}[theorem]{Example}
\theoremstyle{remark}
\newtheorem{remark}[theorem]{Remark}
\tikzstyle{startstop} = [rectangle, rounded corners, 
\tikzstyle{io} = [trapezium, 
\tikzstyle{process} = [rectangle, 
\tikzstyle{decision} = [diamond, 
\tikzstyle{arrow} = [thick,->,>=stealth]
\begin{document}
\title{On the Uniqueness of Convex Central Configurations \\
in the Planar $4$-Body Problem}
\author{Shanzhong Sun\\
Department of Mathematics \\
and\\
Academy for Multidisciplinary Studies\\
Capital Normal University, Beijing 100048, P. R. China\\
sunsz@cnu.edu.cn\\
 Zhifu Xie\\
 School of Mathematics and Natural Science\\
 The University of Southern Mississippi\\
 Hattiesburg, MS 39406,USA\\
 Zhifu.Xie@usm.edu\\
 Peng You\\
  School of Mathematics and Statistics\\
  Hebei University of Economics and Business\\
  Shijiazhuang Hebei 050061, P. R. China\\
you-peng@163.com
  }
 \date{}
\maketitle
\begin{center}
{Dédier à Alain Chenciner avec admiration et amitié}
\end{center}

\begin{abstract} 
 In this paper, we provide a rigorous computer-assisted proof (CAP) of the conjecture that in the planar four-body problem there exists a unique convex central configuration for any four fixed positive masses in a given order belonging to a closed domain in the mass space. The proof employs the Krawczyk operator and the implicit function theorem (IFT). Notably, we demonstrate that the implicit function theorem can be combined with interval analysis, enabling us to estimate the size of the region where the implicit function exists and extend our findings from one mass point to its neighborhood.

 \end{abstract}
{\bf   Key Words:} Central Configuration, Convex Central Configuration,  Uniqueness, $N$-Body Problem, Krawczyk Operator, Implicit Function Theorem. \\
{\bf  Mathematics Subject Classification:} 70F10, 70F15

 \section{Introduction}
 The planar Newtonian $N$-body problem describes the motion of $N$ celestial bodies considered as point particles in the plane under the universal gravitational law, which is governed by a system of second order ordinary differential equations:
 $$m_i\ddot{q_{i}} = \sum_{j=1,j\neq i}^{N} \frac{Gm_im_{j}(q_{j}-q_{i})}{\left\|q_j-q_i\right\|^3}, \hspace{0.5cm}  i = 1,2,..,N.$$
Hereafter the gravitational constant $G$ is set to 1 for convenience and $q_j\in\mathbb{R}^2 $ is the position vector of the $j$-th point mass $m_j$ in the planar inertial barycentric frame. $r_{ij}=\|q_i-q_j\|$ is the Euclidean distance between $q_i$ and $q_j$.  Let 
 \begin{equation} \label{NT2}
C=m_1q_1+\cdots+m_N q_N
\end{equation} 
be the linear moment and $M=m_1+m_2 + \cdots+m_N$ be the  total mass with $c=C/M $ the center of mass. 
Given a mass vector $m=(m_1, m_2, \cdots, m_N)$, a configuration $q=(q_1, q_2, \cdots, q_N)$ is called a {\it central configuration} for $m$ if there exists a positive constant $\lambda$ such that 
 \begin{equation}\label{NT3}
 \sum_{j=1,j\neq i}^{N} \frac{m_i m_{j}(q_{j}-q_{i})}{r_{ij}^3}=-\lambda{m_i(q_{i}-c)}.
\end{equation}
We say that two central configurations are equivalent if they can be related by rigid motions or scalings with respect to the center of mass. Later when we talk about the number of central configurations, we always mean their equivalence class.

There is another equivalent characterization of central configurations.
 Let $U$ be the negative Newtonian potential energy or the self potential given by 
\begin{equation}\label{Upot}
U=\sum_{1\leq i<j\leq N}^{N} \frac{m_i m_j}{r_{ij}},
\end{equation}
and let $I$ be the moment of inertia given by 
\begin{equation}\label{Imom}
I=\frac{1}{M} \sum_{1\leq i<j\leq N}^{N} m_i m_j r_{ij}^2.
\end{equation}
If the center of mass is fixed at the origin of the plane, the moment of inertia is equal to $I=\sum_{i} m_i  |q_i|^2$.  Due to the fact that the potential is homogeneous of degree $-1$, it is well known that $\lambda=\frac{U}{I}$ by the classical Euler Theorem. Now the central configurations are the critical points of $U$ restricted to the \lq\lq shape sphere\rq\rq\,\,  $I=1$, or equivalently they are the critical points of $IU^2$ without constraints. 


Central configurations are significant in understanding the solution structure and dynamical behavior of the $N$-body problem. One of their essential roles is to provide insights into the dynamics in the vicinity of collisions. Additionally, these configurations serve as the initial positions for homographic periodic solutions that preserve the configuration's shape. As a result, they find practical applications, for example, in spacecraft missions. For more on the background about central configurations, please refer to \cite{RM15} and the references therein.

The famous and long-standing Chazy-Wintner-Smale conjecture in the $N$-body problem says that for any given masses, there is only a finite number of central configurations (\cite{RM15, S98}). It has been confirmed for the planar $4$-body problem by Hampton and Moeckel (\cite{HM}) {for any masses} and for the  planar $5$-body problem by Albouy and Kaloshin (\cite{AK}) for any masses except for masses belonging to a codimension $2$ subvariety in the mass space defined by a polynomial system. In the general case, it is still widely open.

In the current paper we will focus on  central configurations in the planar 4-body problem. Though the finiteness of central configurations is already established and reconfirmed in \cite{AK, HM}, a precise finite number or better upper bound on the number of central configurations is still expected.


A related problem concerns the counting of special central configurations (e.g., configurations with interesting geometry) especially for those masses with some kind of \lq\lq symmetry\rq\rq\,\, (e.g., equal masses). Among them, the convex central configurations have attracted attention in many recent papers (\cite{A95,A96,AFS,CCLP, CCR18, CR, FLM, LS1, San18, San21a, San21b, Sch02, Xie2, Xie}, just to mention a few) which are also our main focus in the current paper. By (strict) convexity we mean that no body is contained inside or on the convex hull of the other three bodies. 

The study on convex central configurations dates back to the classical work \cite{MB} where MacMillan and Bartky proved that there exists a convex central configuration in the planar $4$-body problem for any given cyclic ordering of four positive masses. They also derived useful information on the admissible shapes of the $4$-body convex central configurations which was generalized and refined in \cite{YL} and will be helpful for us. Xia (\cite{Xia}) reconfirmed it in a simpler way by proving the stronger existence of local minima of $IU^2$. He also conjectured that for any $N(\geq 5)$ positive masses and any cyclic order of $N$ points on $S^1$, there is a convex planar central configuration of minimum type with the cyclic order.


However for the planar 4-body problem it was conjectured (\cite{ACS}, Problem 10) that there is only one convex central configuration for any four fixed positive masses in a given order. The conjecture is true for four equal masses and some other special settings with constraints on the shapes or masses as cited above, but there is no proof for the general case. It has resisted any attempts for a while due to the fact that the Hessian matrix of the Newton potential is hard to control.

Recently, Corbera-Cors-Roberts (\cite{CCR19}) classified the full set of convex central configurations in the planar 4-body problem by choosing convenient coordinates, and proved it to be three dimensional. As remarked by the authors, the injectivity of the mass map defined in their paper should be established in order to confirm the uniqueness.


We confirm the uniqueness conjecture  by a  rigorous computer-assisted-proof for almost all masses.  All symbolic and numerical computations were carried out by using MATLAB
(2017b),  the open-source software SageMath 9.6 (2022), and Maple 2022. More precisely, we prove the following theorem after normalizing the largest mass $m_2$ to be $1$.

\begin{theorem}\label{MainMainTheorem}
For the planar Newtonian 4-body problem there is a unique convex central configuration for any fixed positive masses $m=(m_1,1,m_3,m_4)$ such that $(m_1,m_3,m_4)\in [0.6,1]^3$ in a given order as in Figure \ref{CC3}.     
\end{theorem}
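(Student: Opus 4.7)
The plan is to combine the Corbera--Cors--Roberts shape parametrization of convex central configurations with rigorous interval arithmetic so as to prove injectivity of the mass map over the cube $[0.6,1]^3$. I would first choose convenient shape coordinates (for example three independent mutual distance ratios, or the CCR parameters from \cite{CCR19}) so that the convex central configuration equations become an analytic system $F(x,m)=0$, where $x$ ranges over an explicit three-dimensional admissible shape domain $\Omega$ (cut out by the MacMillan--Bartky \cite{MB} and Yi--Li \cite{YL} inequalities together with strict convexity) and $m=(m_1,m_3,m_4)\in[0.6,1]^3$, with $m_2=1$ fixed. Uniqueness is then equivalent to the statement that, for every such $m$, the fiber $F(\cdot,m)^{-1}(0)\cap\Omega$ is a singleton.

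At a finite set of sample mass vectors $m^{(k)}\in[0.6,1]^3$ I would enclose every zero of $F(\cdot,m^{(k)})$ by a branch-and-bound interval search over $\Omega$. For each small candidate shape box $X$, the Krawczyk operator $K(X,m^{(k)})$ is computed: if $K(X,m^{(k)})\subset \mathrm{int}(X)$, then $F(\cdot,m^{(k)})$ has exactly one root in $X$ and $D_xF$ is invertible on $X$; if $K(X,m^{(k)})\cap X=\emptyset$, then $X$ contains no root; and indeterminate boxes are bisected and reprocessed. Sweeping the complement of the verified solution boxes this way establishes pointwise uniqueness at each $m^{(k)}$ by a finite rigorous computation.

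The crucial step is to upgrade these pointwise results to neighborhoods of the $m^{(k)}$ in mass space. Once $D_xF(x^{(k)},m^{(k)})$ is certified invertible via Krawczyk on a small box $X^{(k)}$, I would enlarge $X^{(k)}$ and introduce a mass box $M^{(k)}\ni m^{(k)}$, then re-run the interval Krawczyk test on the combined box: if $K(X^{(k)},M^{(k)})\subset \mathrm{int}(X^{(k)})$, then for every $m\in M^{(k)}$ the equation $F(x,m)=0$ has a unique solution $x(m)\in X^{(k)}$ and $D_xF$ stays nonsingular throughout. This is precisely the interval-analytic form of the implicit function theorem advertised in the abstract: the radius of $M^{(k)}$ is estimated from interval enclosures of $\|(D_xF)^{-1}\|$ and $\|D_mF\|$ on $X^{(k)}\times M^{(k)}$, giving a quantitative patch on which the unique shape branch persists.

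Finally, I would cover the compact cube $[0.6,1]^3$ by finitely many such mass boxes $M^{(k)}$, verifying for each that its associated shape enclosure $X^{(k)}$ contains a unique root of $F(\cdot,m)$ for all $m\in M^{(k)}$ and that no other shape box in $\Omega$ contains a root (uniformly in $m\in M^{(k)}$, by a second Krawczyk exclusion test with mass intervals). The main obstacle I expect is a uniform conditioning issue: near degenerate shapes (for example the rhombus locus, the symmetric kite configurations, or the boundary of strict convexity) the Jacobian $D_xF$ can be poorly conditioned, forcing the mass patches $M^{(k)}$ to shrink and producing a combinatorial blow-up in the covering of $[0.6,1]^3$. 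Controlling this will require an a priori geometric restriction of $\Omega$ using the MacMillan--Bartky and Yi--Li bounds, together with adaptive box subdivision, so that the total number of Krawczyk verifications remains within reach of the computer-assisted proof.
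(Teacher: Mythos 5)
Your proposal is correct in outline and follows the same overall architecture as the paper: pointwise verification of existence and uniqueness at finitely many sample masses via a branch-and-bound Krawczyk search over an admissible domain cut out by the MacMillan--Bartky/Long inequalities, continuation of each verified solution to a neighborhood in mass space, a uniform exclusion test outside the verified position box, and a finite covering of the compact cube $[0.6,1]^3$. The one genuinely different ingredient is your continuation step: you propose re-running the Krawczyk operator on a product box $X^{(k)}\times M^{(k)}$ with the masses treated as interval parameters, so that $K(X^{(k)},M^{(k)})\subset \mathrm{int}(X^{(k)})$ certifies a unique root for every $m\in M^{(k)}$ at once. The paper deliberately does \emph{not} do this (its remark attributes exactly this variant to the referee); instead it proves and applies a quantitative implicit function theorem (following Jindal--Banavar--Chatterjee) that yields explicit radii $r$ and $\epsilon$ from computable bounds $L_m$, $M_x$, $K_{xx}$, $K_{mm}$, $K_{xm}$ on first and second derivatives. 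Your parametric-Krawczyk route is valid and arguably simpler to implement, since the same interval machinery handles both verification and continuation, but the mass-box size must be found by trial and error; the paper's IFT route buys an explicit closed-form estimate for $\epsilon$ in terms of derivative bounds, at the cost of extra second-derivative computations. Two smaller points: the paper works in a four-dimensional position chart $(x_1,y_1,x_3,y_3)$ with $q_2,q_4$ pinned at $(\mp 1,0)$, selects four of the six Dziobek equations as the square system $F=[f_{12},f_{13},f_{14},f_{23}]$, and checks a posteriori that $f_{24}$ and $f_{34}$ vanish on the output enclosure --- if you use a three-dimensional CCR-type shape parametrization you will need to make an analogous selection of a square subsystem and justify that its zeros in the admissible region are exactly the convex central configurations; and your worry about poor conditioning near degenerate shapes is borne out by the paper only near the boundary of the mass cube (small masses), not inside $[0.6,1]^3$, where all computed Jacobians were certified nonsingular.
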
 

\begin{remark}
\begin{itemize}
    \item The settings and conventions will be given in detail in \S 2.
    
    \item In the statement of the theorem, the number $0.6$ is far {from being} optimal. In fact, for any $(m_1,m_3,m_4)\in $ $[\delta_0,1]^3$ with given $\delta_0$ a fixed positive real number, our methods should work. We will address this point in the last section.
   
    \item The {remaining masses are those} near zero which we will also comment on the last section, and it seems to be an interesting and challenging problem both analytically and numerically.

    \item Note that in our setting we fix the relative ordering of the masses and vary the masses. This is equivalent to prove the theorem for a given mass in any ordering. 
    
\end{itemize}
\end{remark}


 The main idea of the proof can be summarized as follows: We initially validate the conjecture for a set of selected mass vectors $m_0$  using interval analysis and MacMillan-Bartky's classical result on the existence of convex central configurations in the domain of mass. Subsequently, by determining the implicit function theorem's domain size, we can extend the central configuration's existence and uniqueness to a neighborhood of $m_0$, say a  mass ball $B(m_0,\epsilon)$ for the position ball $B(x_0,r)$. Then we verify that no solutions to the central configuration equations exist outside the previously considered  position domain $B(x_0,r)$ for the mass ball $B(m_0,\epsilon)$. Lastly we proceed to establish the uniqueness within the stated closed domain $[0.6,1]^3$ by iteratively implementing the aforementioned method. We call such a mass ball $B(m_0, \epsilon)$ a uniqueness mass ball if there is a unique convex central configuration for any mass in this ball. In short, the idea of the proof is to construct enough  uniqueness mass balls to cover the whole mass space. Please refer to the flow chart in Figure \ref{FlowChart} for a visual representation of the process.

\begin{figure}
  \centering
  \resizebox{14cm}{!} 
  {
\begin{tikzpicture}[node distance=3.3cm]
\node (start1) [startstop] {Initialize masses: $m_2=1, $ $(m_1,m_3,m_4)\in [0.6,1]^3$
Initialize position $x=[x_1,y_1,x_3,y_3]\in \mathbf{U}$};
\node (pro1) [process, below of=start1] {Process1: \\Interval Arithmetic  \& Krawczyk Operator,\\ uniquness of CC for $m_0$};
\node (pro2) [process, below of=pro1,yshift=-0.6cm] {Process 2:\\Implicit Function Theorem,\\ uniqueness mass ball};
\node (pro3) [process, below of=pro2, yshift=0.5cm] {Process 3: \\Confirm that CCs for $B(m_0,\epsilon)$ are in $B(x_0,r)$ };
\node (dec1) [decision, right of=pro3, xshift=3.0cm] {Decision:\\ uniqueness mass balls cover the mass space };
\node (stop) [startstop, below of=dec1,yshift=-1.4cm] {Stop};
\node (pro4) [io, above of=dec1, yshift=1.5cm] {generate more uniqueness mass balls};
\draw [arrow] (start1) -- node[anchor=east] {$m_0$}(pro1);
\draw [arrow] (pro1) -- node[anchor=east] { $x_0$ and $m_0$}(pro2);
\draw [arrow] (pro2) -- node[anchor=east] { $B(m_0,\epsilon)$ and $B(x_0,r)$} (pro3);
\draw [arrow] (pro3) -- node[anchor=south]{}(dec1);
\draw [arrow] (dec1) -- node[anchor=east] {yes} (stop);
\draw [arrow] (dec1) -- node[anchor=west] {no} (pro4);
\draw [arrow] (pro4) |-(start1);
\end{tikzpicture}
}
\caption{Flow chart to illustrate the computation process. The goal of the computation is to generate enough uniqueness mass balls to cover the whole mass space. Processes 1, 2, and 3 correspond to sections 3, 4 and 5 respectively. }\label{FlowChart}
\end{figure}
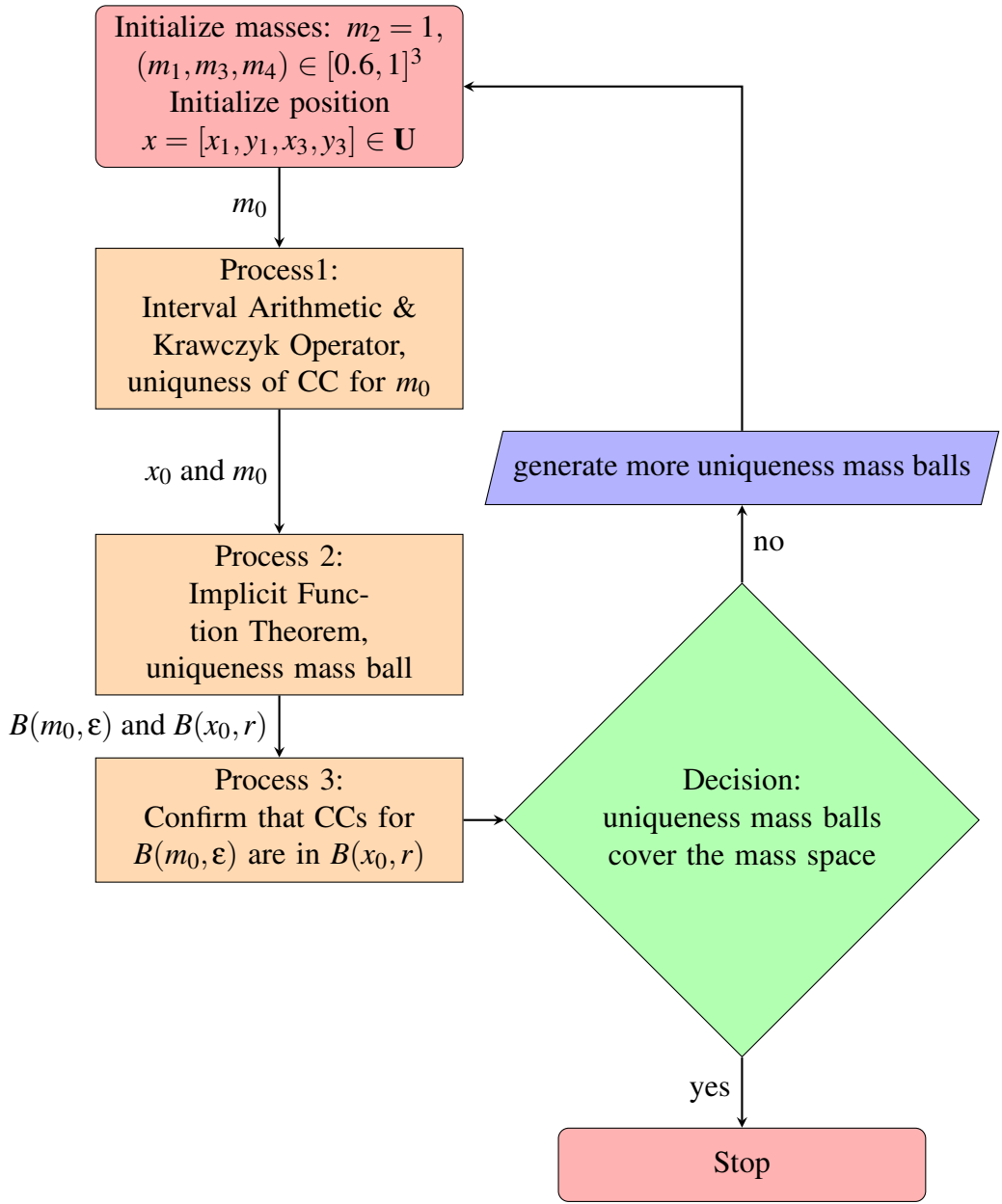

Note that Lee and Santoprete (\cite{LS09}) have already used the interval Krawczyk method to study the exact counting of the central configurations in the planar $5$-body problem with equal masses. Moczurad and Zgliczynski (\cite{MZ}) also used this method to study the central configurations for equal masses for planar $5, 6,7$-body problems. These papers are quite different from our situation here because we are interested in ALL masses.

After the introduction to the uniqueness conjecture and our main result establishing the conjecture with the idea of the proof in the current section, we give in \S 2 the form of the  equations about central configurations convenient for our purpose with useful results which add suitable constraints about the convex central configuration. We introduce the basics about the interval algorithm in \S 3, then based on this we prove the existence of a convex central configuration for a mass vector at the meshed grid point. \S 4 is about the size of the domain where the implicit function theorem is valid. We use the theorem from \cite{JCB}, and for completeness, we give a proof. We exclude the possibilities for the existence of convex central configurations outside of the domain generated by IFT. Finally in \S 5 we establish a computer-assisted-proof to our main Theorem \ref{MainMainTheorem}. We close the paper with some remarks and comments in \S 6.

As always insisted by Alain Chenciner, understanding the reasoning behind the truth is of primary importance, and we leave a more conceptual proof for the uniqueness to future research.

\section{Equations for the Convex Central Configurations in the Planar 4-Body Problem }
 For the planar $4$-body central configurations we can use the equivalent Dziobek-Laura-Andoyer equations ( \cite{YH}, p.241) 
\begin{equation}\label{NT4}
 f_{ij}=\sum_{k=1, k\neq i, j}^{4} m_{k}(R_{ik}-R_{jk})\Delta_{ijk} = 0
\end{equation} 
 for $1\leq i < j \leq 4$, where $R_{ij} = 1/r_{ij}^3$ and 
      $\Delta_{ijk} = (q_{i} - q_{k})\times (q_{j}-q_{k})$ with $\times $ the cross product of two vectors. Thus the area   $\Delta_{ijk}$ gives the twice the signed area of the triangle    $\Delta{ijk}$. If the bodies $m_i, m_j, m_k$ are in counter clockwise, the area of $\Delta_{ijk}$ is positive. Otherwise, it is negative.
      
 From the point of view of CAP (computer-assisted proofs) in the problem of finding and
counting all CCs, the dimension and the size of the search domain is fundamental. The obstacles  arise for the following reasons:
 \begin{itemize}
\item two or more bodies might be arbitrary close to each other (collisions, singularities of the system of equations);
\item  bodies might be arbitrary far from each other (unbounded domain);
\item  central configurations are not isolated due to the invariance of the system under rotation, scaling or reflection which could give some degeneracy in search algorithm.
 \end{itemize}

Numerically solving the central configuration equations \eqref{NT4} directly in a naive way is not feasible due to these issues. However, these problems can be addressed either by imposing conditions on the masses or by employing alternative methods. By demanding sufficiently large masses (e.g., $0.6$ as in Theorem \ref{MainMainTheorem}), the first issue can be avoided, and we will revisit this point in the future, as discussed in \S 6. By leveraging existing results, we can reduce the number of variables and limit the search domain to a bounded region in the configuration space, far from collisions. This approach allows us to isolate central configurations and eliminate continuous symmetries.
     
  \begin{lemma}[\cite{RM}, Perpendicular Bisector Theorem, p.510] Let $q=(q_1, q_2, \cdots, q_n)$ be a planar central configuration for $m\in (\mathbf{R^+})^n$ and let $q_i$ and $q_j$ be any two positions.  Denote by $Q_1, Q_2, Q_3,$ and $Q_4$ the four open quadrants in $\mathbf{R}^2$ in clockwise order obtained by deleting the line passing through $q_i$ and $q_j$ and the perpendicular bisector of the line segment $\overline{q_iq_j}$.  Then if there exists some $q_k\in Q_1 \cup Q_3$, there must exist at least another $q_l \in Q_2 \cup Q_4$. 
  \end{lemma}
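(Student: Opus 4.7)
The plan is to reduce the statement to a single signed identity by exploiting the symmetry of the configuration across the perpendicular bisector. I would first place $q_i = (-a,0)$ and $q_j = (a,0)$ by a rigid motion, so that the line through them becomes the $x$-axis and the perpendicular bisector becomes the $y$-axis; with this normalization, membership in $Q_1 \cup Q_3$ versus $Q_2 \cup Q_4$ is exactly captured by the sign of the product $x_k y_k$ (positive in $Q_1 \cup Q_3$ and negative in $Q_2 \cup Q_4$, irrespective of whether the labels $Q_1,\ldots,Q_4$ run clockwise or counterclockwise).

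Next, I would extract only the $y$-component of the central configuration equation \eqref{NT3} for bodies $i$ and $j$. Since $y_i = y_j = 0$, both $y$-equations share the same right-hand side $\lambda c_y$, so subtracting them eliminates the centripetal term and also kills the $k=i$ and $k=j$ boundary contributions in the two sums. What remains is the identity
\begin{equation*}
\sum_{k \neq i, j} m_k \, y_k \left( \frac{1}{r_{ik}^3} - \frac{1}{r_{jk}^3} \right) = 0.
\end{equation*}
The decisive algebraic observation is the elementary identity $r_{ik}^2 - r_{jk}^2 = 4a\, x_k$, which forces $1/r_{ik}^3 - 1/r_{jk}^3$ to carry the opposite sign of $x_k$. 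Consequently each summand carries the opposite sign of $m_k\, x_k y_k$: bodies in $Q_1 \cup Q_3$ contribute strictly negative terms, bodies in $Q_2 \cup Q_4$ strictly positive terms, and bodies lying on either of the two axes contribute exactly zero. The claim then follows by contradiction: if some body lies in $Q_1 \cup Q_3$ but none in $Q_2 \cup Q_4$, every summand is nonpositive with at least one strictly negative, contradicting the vanishing of the sum.

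The argument is essentially a one-identity proof, so I do not anticipate a deep obstacle. The two items requiring careful bookkeeping are the following. First, one must confirm that bodies sitting exactly on one of the two axes really contribute zero (either because $y_k = 0$ directly, or because $x_k = 0$ forces $r_{ik} = r_{jk}$); these bodies are harmless but must be acknowledged so as not to spoil the strict sign count. Second, the strict positivity of every $m_k$ is essential—a vanishing or negative mass would flip the sign of its contribution and invalidate the contradiction—which is precisely why the hypothesis $m \in (\mathbf{R}^+)^n$ appears in the statement.
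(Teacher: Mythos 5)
Your argument is correct. The paper gives no proof of this lemma (it is quoted from Moeckel), but your derivation is exactly the classical one: after normalizing $q_i=(-a,0)$, $q_j=(a,0)$, the identity you obtain by subtracting the two $y$-component equations is precisely the Dziobek--Laura--Andoyer equation $f_{ij}=\sum_{k\neq i,j}m_k(R_{ik}-R_{jk})\Delta_{ijk}=0$ of \eqref{NT4}, since $\Delta_{ijk}=2a\,y_k$ in these coordinates, and your sign analysis of $R_{ik}-R_{jk}$ via $r_{ik}^2-r_{jk}^2=4a\,x_k$ is the standard conclusion of the proof.
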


   \begin{lemma}[\cite{YL}, Theorem 4.1] 
  For any $m=(m_1, m_2, m_3,m_4)\in (\mathbf{R}^+)^4$, let $q=(q_1, q_2,$ $ q_3,$ $ q_4)$ be a convex central configuration for $m$ as in Figure \ref{CC3}. Let $\overline{q_2q_4}$ be the line segment between points $q_2$ and $q_4$. Then $q_1$ and $q_3$ must satisfy the following inequalities: 
  \begin{equation}
  \frac{\sqrt{3}}{6} r_{24}< \max \{ |q_1 - \overline{q_2q_4} |, |q_3 - \overline{q_2q_4} | \} <\frac{\sqrt{3}}{2} r_{24},
  \end{equation}
 \begin{equation}
\left(1- \frac{\sqrt{3}}{2}\right) r_{24}< \min \{ |q_1 - \overline{q_2q_4} |, |q_3 - \overline{q_2q_4} | \} .
  \end{equation}
  \end{lemma}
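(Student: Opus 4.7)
The plan is to place coordinates so that $\overline{q_2q_4}$ lies on the horizontal axis symmetrically about the origin, with $q_2=(-r_{24}/2,0)$ and $q_4=(r_{24}/2,0)$. By strict convexity and the cyclic ordering in Figure \ref{CC3}, the remaining two bodies lie in opposite open half-planes; writing $q_1=(x_1,y_1)$ with $y_1>0$ and $q_3=(x_3,y_3)$ with $y_3<0$, the two quantities to be estimated are precisely $y_1$ and $-y_3$.

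The upper bound $\max\{y_1,-y_3\}<\frac{\sqrt{3}}{2}r_{24}$ is the easier half. In a convex planar $4$-body central configuration, each of the two diagonals strictly exceeds every side, a classical fact going back to MacMillan-Bartky \cite{MB}. Applied to the diagonal $\overline{q_2q_4}$ and the adjacent sides $\overline{q_1q_2},\overline{q_1q_4}$, this gives $r_{12}<r_{24}$ and $r_{14}<r_{24}$, which locates $q_1$ in the lens-shaped intersection of the two open disks of radius $r_{24}$ centered at $q_2$ and $q_4$. A direct calculation shows this lens has vertical extent strictly less than $\frac{\sqrt{3}}{2}r_{24}$, the equilateral apex $(0,\frac{\sqrt{3}}{2}r_{24})$ lying only on its closure. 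The symmetric argument applied to $q_3$ completes this half.

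For the lower bound $\max\{y_1,-y_3\}>\frac{\sqrt{3}}{6}r_{24}$, I would argue by contradiction: suppose both heights are $\le\frac{\sqrt{3}}{6}r_{24}$, so that the configuration is confined to a narrow horizontal slab. The Perpendicular Bisector Theorem restricts the horizontal positions of $q_1,q_3$ so that the spread $|x_1-x_3|$ is controlled in terms of $r_{24}$; combining with the slab hypothesis produces an upper estimate on $r_{13}$ that clashes with the companion MacMillan-Bartky inequality $r_{13}>\max\{r_{12},r_{14},r_{23},r_{34}\}$. The constant $\frac{\sqrt{3}}{6}$ is precisely the inradius of an equilateral triangle of side $r_{24}$, and is the sharp threshold at which this chain of inequalities can just barely be made consistent.

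For the final inequality $\min\{y_1,-y_3\}>(1-\frac{\sqrt{3}}{2})r_{24}$, I would call on the Dziobek-Laura-Andoyer equation
$$f_{13}=m_2(R_{12}-R_{23})\Delta_{132}+m_4(R_{14}-R_{34})\Delta_{134}=0,$$
in which the signed areas $\Delta_{132}$ and $\Delta_{134}$ have opposite signs determined by the convex ordering. Coupled with the previously established relations $r_{12},r_{14},r_{23},r_{34}<r_{24}$, this equation forces $R_{12}-R_{23}$ and $R_{14}-R_{34}$ to share a sign and to satisfy a precise balance. The main technical obstacle, and the heart of the argument, will be extracting the sharp constant $1-\frac{\sqrt{3}}{2}$: the extremal configuration appears to be a limit in which three bodies tend to an equilateral triangle on one side of the diagonal while the fourth collapses onto the diagonal itself, and turning this geometric picture into a rigorous quantitative estimate requires a delicate case analysis of which of $R_{12}-R_{23}$, $R_{14}-R_{34}$ dominates in the balance above.
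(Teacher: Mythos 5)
First, a remark on scope: the paper does not prove this lemma at all --- it is quoted verbatim from Long (\cite{YL}, Theorem 4.1) and used as a black box, so there is no internal proof to compare against; you are in effect re-proving Long's theorem. Your first step is sound and standard: the Dziobek relation $m_im_j(r_{ij}^{-3}-\lambda')=\sigma\Delta_i\Delta_j$ forces every exterior side of a convex central configuration to be strictly shorter than every diagonal, so $r_{12},r_{14}<r_{24}$ places $q_1$ in the open lens cut out by the two disks of radius $r_{24}$ centered at $q_2,q_4$, whose apex sits at height $\frac{\sqrt{3}}{2}r_{24}$; together with the symmetric statement for $q_3$ this gives the upper bound, and also shows the distance to the segment equals the height.

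The other two inequalities are where the genuine gaps lie. For $\max\{h_1,h_3\}>\frac{\sqrt{3}}{6}r_{24}$ you assert that the Perpendicular Bisector Theorem ``controls $|x_1-x_3|$'' and that the resulting bound on $r_{13}$ ``clashes'' with $r_{13}>\max(\mathrm{sides})$. What that theorem actually yields is $x_1x_3\ge 0$ (both interior bodies on the same side of the bisector of $\overline{q_2q_4}$), whence $|x_1-x_3|<r_{24}/2$ inside the lens; but the comparison of $r_{13}$ with the four sides under the slab hypothesis $h_1,h_3\le\frac{\sqrt{3}}{6}r_{24}$ is a genuine multi-case computation (which side dominates depends on the signs and relative sizes of $x_1,x_3$, and a crude averaging of $r_{12}^2+r_{23}^2+r_{34}^2+r_{14}^2$ against $4r_{13}^2$ does not close it), so you have stated the conclusion rather than proved it. The third inequality is worse off: you explicitly defer ``the heart of the argument,'' and the route you propose (the single equation $f_{13}=0$ plus a sign analysis of $R_{12}-R_{23}$ and $R_{14}-R_{34}$) does not visibly produce the constant $1-\frac{\sqrt{3}}{2}$. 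Your heuristic for the extremal picture is also suspect: a body ``collapsing onto the diagonal'' would drive $\min\{h_1,h_3\}$ to $0$, contradicting the very bound you are proving; the configuration that actually saturates the constant is the degenerate limit $r_{12}=r_{14}=r_{13}=r_{24}$, with $q_1$ at the apex of the lens and $q_3$ at height exactly $(1-\frac{\sqrt{3}}{2})r_{24}$ below the midpoint of $\overline{q_2q_4}$, i.e.\ the boundary case of the side-versus-diagonal inequalities. The efficient way to obtain both missing bounds is the one Long follows: use the full set of MacMillan--Bartky estimates tying all six mutual distances to the single length $\ell=\lambda'^{-1/3}$ (all sides $<\ell<$ both diagonals, together with the companion quantitative bounds), rather than an ad hoc pairwise analysis. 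As written, only the $\frac{\sqrt{3}}{2}$ bound is established.
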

    \begin{figure}[ht]
\centering
\includegraphics[width=0.4\linewidth,height=2.5in]{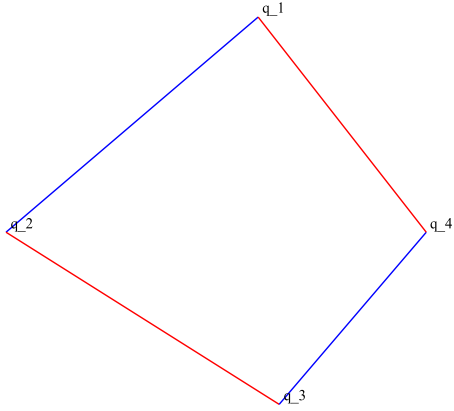}
 \caption{Admissible Convex Configuration: $q_1, q_2, q_3, q_4$ are in counter clockwise. $q_1=(x_1, y_1)$,  $q_2=(-1,0)$, $q_3=(x_3, y_3)$, and $q_4=(1, 0)$. $[x_1,y_1,x_3,y_3]\in \mathbf{U}.$  
 }\label{CC3}
\end{figure}

 Due to the invariance of central configurations under rotation and scaling, we can choose the coordinates for the $4$-body configuration as $q_1=(x_1, y_1)$,  $q_2=(-1,0)$, $q_3=(x_3, y_3)$, and $q_4=(1, 0)$. 
  From the above two lemmas, it is easy to prove the following theorem whose proof is omitted. 
  
  \begin{theorem}\label{Theorem1}
   For any $m=(m_1, m_2, m_3,m_4)\in (\mathbf{R}^+)^4$, let $q=(q_1, q_2, q_3, q_4)$ be a convex central configuration for $m$.  Without loss of generality, we assume $q_1=(x_1, y_1)$,  $q_2=(-1,0)$, $q_3=(x_3, y_3)$, and $q_4=(1, 0)$ as given in Figure \ref{CC3} after rotation, translation and scaling. Then
   \begin{equation}
   -1<x_1<1, -1<x_3<1, x_1x_3\geq 0,
   \end{equation}
   \begin{equation}
  2\left(1- \frac{\sqrt{3}}{2}\right) <y_1<{\sqrt{3}}, \hspace{0.6cm}  -{\sqrt{3}}<y_3< -2\left(1- \frac{\sqrt{3}}{2}\right),
  \end{equation}
  \begin{equation}
  \sqrt{(x_1+1)^2+y_1^2}<2,   \hspace{0.6cm}  \sqrt{(x_1-1)^2+y_1^2}<2, 
  \end{equation}
  \begin{equation}
  \sqrt{(x_3+1)^2+y_3^2}<2, \hspace{0.6cm}  \sqrt{(x_3-1)^2+y_3^2}<2.
  \end{equation}
Denote by $\mathbf{U}$ the admissible region of the position vectors which satisfy the above conditions.\\
If $m_2> m_4$, then $0<x_1<1$ and $0<x_3<1$. If $m_2=m_4$, then $x_1=x_3=0$.  \\

  \end{theorem}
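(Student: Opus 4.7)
The plan is to verify each clause of the theorem in turn, with the mass-dependent sign refinement as the main obstacle. The $y$-coordinate bounds are immediate from Lemma 2.3 with $r_{24}=|q_4-q_2|=2$: the counter-clockwise convex ordering of Figure \ref{CC3} forces $y_1>0$ and $y_3<0$, so $|q_1-\overline{q_2q_4}|=y_1$ and $|q_3-\overline{q_2q_4}|=-y_3$, and Lemma 2.3 yields $2-\sqrt{3}<y_1<\sqrt{3}$ together with $-\sqrt{3}<y_3<\sqrt{3}-2$. The distance bounds $r_{12},r_{14},r_{23},r_{34}<2$ are the classical MacMillan--Bartky fact (\cite{MB}) that in a convex 4-body CC each side is strictly shorter than either diagonal, applied with $r_{24}=2$ one of the two diagonals. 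The $x$-coordinate bounds then drop out: intersecting the disks $(x_1\pm 1)^2+y_1^2<4$ with $y_1>0$ gives $x_1\in\bigl(1-\sqrt{4-y_1^2},\,\sqrt{4-y_1^2}-1\bigr)\subsetneq(-1,1)$ (using $0<\sqrt{4-y_1^2}<2$), and identically for $x_3$.

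For $x_1x_3\ge 0$ I would apply Lemma 2.2 to the pair $(q_2,q_4)$: the dividing line is the $x$-axis and its perpendicular bisector is the $y$-axis, so the four open quadrants are the standard plane quadrants, with $q_2,q_4$ sitting on the common boundary and contributing nothing. If $x_1>0$ then $q_1\in Q_1$, so some body must lie in $Q_2\cup Q_4$; the only eligible candidate is $q_3$, and $y_3<0$ places it in $Q_2$, giving $x_3>0$. The case $x_1<0$ is symmetric, and if $x_1=0$ then any nonzero $x_3$ forces $q_3$ into an open quadrant requiring another body in the complementary pair, a contradiction, so $x_3=0$. A sharper same-sign statement is then extracted from the DLA equation $f_{24}=0$ by computing $\Delta_{241}=2y_1,\Delta_{243}=2y_3$, which reduces it to
\begin{equation*}
m_1 y_1(R_{12}-R_{14})+m_3 y_3(R_{23}-R_{34})=0,
\end{equation*}
and $y_1>0>y_3$ forces $\operatorname{sign}(R_{12}-R_{14})=\operatorname{sign}(R_{23}-R_{34})$, i.e.\ $\operatorname{sign}(x_1)=\operatorname{sign}(x_3)$.

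The mass-dependent dichotomy is the main obstacle, which I would split into three sub-steps. Step~A, $x_1=x_3=0\Rightarrow m_2=m_4$: substitute into the $x$-component of \eqref{NT3} for $q_1$; using $r_{12}=r_{14}$ and $c_x=(m_4-m_2)/M$ the equation collapses to $(m_4-m_2)(1/r_{12}^3-\lambda/M)=0$, while subtracting the $x$-components of \eqref{NT3} for $q_4$ and $q_2$ yields $(m_2-m_4)(1/4-2\lambda/M)=0$. Assuming $m_2\neq m_4$ these force $\lambda=M/r_{12}^3=M/8$, hence $r_{12}=2$, contradicting the strict bound $r_{12}<2$ from the previous paragraph. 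Step~B, $m_2=m_4\Rightarrow x_1=x_3=0$: here I would invoke the classical symmetrization result of Albouy for convex 4-body CCs with two equal opposite masses (\cite{A95,A96}), which forces the configuration to be symmetric across the perpendicular bisector of $\overline{q_2q_4}$, namely the $y$-axis, so $x_1=x_3=0$. This is the piece not derivable from the CC equations alone and is the real analytic hurdle. Step~C, $m_2>m_4\Rightarrow x_1,x_3>0$: Steps~A and B together with the sign-matching above show $x_1,x_3$ are nonzero with a common strict sign, and the sign is pinned down to positive by a continuity/IFT argument from the symmetric limit $m_2=m_4$, where linearizing the CC system yields $\partial x_1/\partial m_2\bigr|_{m_2=m_4}>0$ (using the IFT framework developed later in \S 4), so that raising $m_2$ above $m_4$ pushes $q_1,q_3$ into $\{x>0\}$.
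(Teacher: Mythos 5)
The paper gives no proof of this theorem (it is declared to follow ``easily'' from the two preceding lemmas and the proof is omitted), so there is nothing to compare against line by line; I am judging your proposal on its own merits. The geometric part is correct and is surely the intended route: the $y$-bounds come from Lemma 2.3 with $r_{24}=2$ (upper bound from the max, lower bound from the min), the sign coherence $x_1x_3\ge 0$ from the Perpendicular Bisector Theorem applied to the pair $(q_2,q_4)$ (your quadrant analysis is right, including the degenerate case $x_1=0$), and the four disk conditions from the MacMillan--Bartky fact that every exterior side of a convex central configuration is strictly shorter than either diagonal, applied with $r_{24}=2$; the bounds $-1<x_1<1$, $-1<x_3<1$ then follow by intersecting the two disks as you do. Your re-derivation of the sign coherence from $f_{24}=0$ is also correct, although it is the same statement rather than a sharper one. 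Step A is a clean and correct computation: $x_1=x_3=0$ together with $m_2\neq m_4$ forces $\lambda/M=1/8=1/r_{12}^3$, hence $r_{12}=2$, contradicting side $<$ diagonal.

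The problems are in Steps B and C. For Step B the result you need --- a convex central configuration with equal masses at the opposite vertices $q_2,q_4$ is symmetric about the diagonal $q_1q_3$, which here means $x_1=x_3=0$ --- is the Albouy--Fu--Sun theorem \cite{AFS}, not the equal-mass papers \cite{A95,A96} you cite; that is only a citation fix. Step C, however, does not close. Having shown that $x_1,x_3$ carry a strict common sign whenever $m_2>m_4$, you propose to pin the sign to $+$ by continuation from the symmetric family. But (i) invertibility of $D_xF$ at the symmetric solution is nowhere established, (ii) the sign of $\partial x_1/\partial m_2$ at $m_2=m_4$ is asserted rather than computed, and (iii) most seriously, a local implicit-function statement only controls the branch passing through the symmetric solution: to exclude a convex central configuration with $m_2>m_4$ and $x_1,x_3<0$ you would have to connect an \emph{arbitrary} such solution to the symmetric family, and that global continuation is essentially the uniqueness statement the whole paper is trying to prove --- the argument is circular as written. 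A non-circular route would have to extract a direct inequality from $f_{13}=0$, which reads $m_2(R_{12}-R_{23})\Delta_{123}=m_4(R_{14}-R_{34})\Delta_{134}$ with $\Delta_{123},\Delta_{134}>0$, possibly combined with the reflection symmetry $(x_1,x_3,m_2,m_4)\mapsto(-x_1,-x_3,m_4,m_2)$; as it stands the final clause of the theorem is not proved by your argument.
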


 For the $4$-body problem, the equation \eqref{NT4} gives  a system of six equations. Here are the expressions of the equations: 
  $$  \Delta_{123}=\left({x_3} +1\right) {y_1} -{y_3} \left({x_1} +1\right),  \Delta_{124}=2y_1,$$ 
  $$ \Delta_{134}=\left(-{x_3} +1\right) {y_1} +{y_3} \left({x_1} -1\right),\Delta_{234}=-2y_3.$$
  \begin{equation}
  \begin{array}{ll}
  f_{12}=& {m_3} \left(\frac{1}{\left(\left({x_1} -{x_3} \right)^{2}+\left({y_1} -{y_3} \right)^{2}\right)^{\frac{3}{2}}}-\frac{1}{\left(\left({x_3} +1\right)^{2}+{y_3}^{2}\right)^{\frac{3}{2}}}\right) \left(\left({x_3} +1\right) {y_1} -{y_3} \left({x_1} +1\right)\right)+\\
 &  2 {m_4} \left(\frac{1}{\left(\left({x_1} -1\right)^{2}+{y_1}^{2}\right)^{\frac{3}{2}}}-\frac{1}{8}\right) {y_1}.
  \end{array}
\end{equation}
 \begin{equation}
  \begin{array}{ll}
f_{13}=&{m_2} \left(\frac{1}{\left(\left({x_1} +1\right)^{2}+{y_1}^{2}\right)^{\frac{3}{2}}}-\frac{1}{\left(\left({x_3} +1\right)^{2}+{y_3}^{2}\right)^{\frac{3}{2}}}\right) \left(-\left({x_3} +1\right) {y_1} +{y_3} \left({x_1} +1\right)\right)+\\
& {m_4} \left(\frac{1}{\left(\left({x_1} -1\right)^{2}+{y_1}^{2}\right)^{\frac{3}{2}}}-\frac{1}{\left(\left({x_3} -1\right)^{2}+{y_3}^{2}\right)^{\frac{3}{2}}}\right) \left(\left(-{x_3} +1\right) {y_1} +{y_3} \left({x_1} -1\right).\right)
  \end{array}
\end{equation}

 \begin{equation}
  \begin{array}{ll}
f_{14}=&-2 {m_2} \left(\frac{1}{\left(\left({x_1} +1\right)^{2}+{y_1}^{2}\right)^{\frac{3}{2}}}-\frac{1}{8}\right) {y_1} + \\
& {m_3} \left(\frac{1}{\left(\left({x_1} -{x_3} \right)^{2}+\left({y_1} -{y_3} \right)^{2}\right)^{\frac{3}{2}}}-\frac{1}{\left(\left({x_3} -1\right)^{2}+{y3}^{2}\right)^{\frac{3}{2}}}\right) \left(-\left(-{x_3} +1\right) {y_1} -{y_3} \left({x_1} -1\right)\right).
  \end{array}
\end{equation}

 \begin{equation}
  \begin{array}{ll}
  f_{23}=& {m_1} \left(\frac{1}{\left(\left({x_1} +1\right)^{2}+{y_1}^{2}\right)^{\frac{3}{2}}}-\frac{1}{\left(\left({x_1} -{x_3} \right)^{2}+\left({y_1} -{y_3} \right)^{2}\right)^{\frac{3}{2}}}\right) \left(\left({x_3} +1\right) {y_1} -{y_3} \left({x_1} +1\right)\right)-\\
  &2 {m_4} \left(\frac{1}{8}-\frac{1}{\left(\left({x_3} -1\right)^{2}+{y_3}^{2}\right)^{\frac{3}{2}}}\right) {y_3}.
  \end{array}
\end{equation}

 \begin{equation}
  \begin{array}{ll}
  f_{24}=& 2 {m_1} \left(\frac{1}{\left(\left({x_1} +1\right)^{2}+{y_1}^{2}\right)^{\frac{3}{2}}}-\frac{1}{\left(\left({x_1} -1\right)^{2}+{y_1}^{2}\right)^{\frac{3}{2}}}\right) {y_1} +\\
  &2 {m_3} \left(\frac{1}{\left(\left({x_3} +1\right)^{2}+{y_3}^{2}\right)^{\frac{3}{2}}}-\frac{1}{\left(\left({x_3} -1\right)^{2}+{y_3}^{2}\right)^{\frac{3}{2}}}\right) {y_3}.
  \end{array}
\end{equation}

 \begin{equation}
  \begin{array}{ll}
  f_{34}=& {m_1} \left(\frac{1}{\left(\left({x_1} -{x_3} \right)^{2}+\left({y_1} -{y_3} \right)^{2}\right)^{\frac{3}{2}}}-\frac{1}{\left(\left({x_1} -1\right)^{2}+{y_1}^{2}\right)^{\frac{3}{2}}}\right) \left(\left(-{x_3} +1\right) {y_1} +{y_3} \left({x_1} -1\right)\right)-\\
  & 2 {m_2} \left(\frac{1}{\left(\left({x_3} +1\right)^{2}+{y_3}^{2}\right)^{\frac{3}{2}}}-\frac{1}{8}\right) {y_3}.
  \end{array}
\end{equation}

Note that the equations $f_{ij}(x,m)=0$ are homogeneous and in fact linear with respect to $m$. We can label the body with the largest mass value to be $m_2$ and the corresponding mass is normalized to one. Therefore $m_2=1$ and $0<m_i\leq 1$ for $i=1, 3, 4$. Then the problem of convex central configurations for the planar $4$-body problem is to find the zeros of $f_{ij}=0$ in the bounded domain $\mathbf{U}$ with bounded mass parameters $m_1, m_3, $ and $m_4$. 

It is easy to use Matlab command \lq\lq fmincon\rq\rq\,\, or \lq\lq fsolve\rq\rq\,\,  to numerically find the convex central configuration in the domain $\mathbf{U}$ for any given positive masses. But the commands \lq\lq fmincon\rq\rq\, or \lq\lq fsolve\rq\rq\, can not give any information whether the solution is unique even locally. We will use the interval analysis and the  Krawczyk operator to produce such information for some given mass values $m$. 
\begin{theorem}\label{MainTheorem}
For each given mass vector  $m_0=[m_{10}, m_{20}, m_{30},m_{40}]\in (\mathbf{R}^+)^4$  at a meshed grid point  such that $m_{20}=1$ and $0.6\leq m_{i0}\leq 1$ for $i=1,3,4$, there exists a unique convex central configuration in the order as given in Figure \ref{CC3}.
\end{theorem}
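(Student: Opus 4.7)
My plan is to reduce Theorem \ref{MainTheorem} to a purely interval-computational statement about the zeros of a square smooth map on the bounded region $\mathbf{U}$, and then verify that statement with the Krawczyk operator together with a branch-and-bound exclusion procedure. After the normalizations of Theorem \ref{Theorem1} the four unknowns $(x_1,y_1,x_3,y_3)$ live in the explicit bounded set $\mathbf{U}$, which is also uniformly bounded away from all collisions once $m_{i0}\geq 0.6$. Among the six Dziobek--Laura--Andoyer equations $f_{ij}=0$ only four are generically independent (the other two reflect vanishing of total force and torque), so I would choose four of them, say $F=(f_{12},f_{13},f_{24},f_{34})$, to obtain a map $F(\,\cdot\,,m_0):\mathbf{U}\to\mathbb{R}^{4}$ whose zeros in $\mathbf{U}$ are exactly the convex central configurations for $m_0$ in the prescribed ordering.

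For local existence and uniqueness near a numerical guess, I would first compute a high-precision approximate zero $x_{*}\in \mathbf{U}$ using Matlab's \texttt{fsolve}, then form a small axis-aligned box $X_{0}\ni x_{*}$ and evaluate the Krawczyk operator
\[
K(x_{*},X_{0}) \;=\; x_{*} \;-\; C\,F(x_{*},m_{0}) \;+\; \bigl(I-C\,F'(X_{0},m_{0})\bigr)(X_{0}-x_{*}),
\]
where $C$ is a floating-point approximation of $F'(x_{*},m_{0})^{-1}$ and $F'(X_{0},m_{0})$ is the interval enclosure of the Jacobian on $X_{0}$. The standard Krawczyk theorem then says that if $K(x_{*},X_{0})\subset \mathrm{int}(X_{0})$, the Jacobian $F'$ is invertible on $X_{0}$ and $F(\,\cdot\,,m_{0})$ has exactly one zero in $X_{0}$. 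By shrinking $X_{0}$ if necessary, I expect this inclusion to be numerically certifiable at every meshed $m_{0}$.

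To upgrade this to uniqueness inside all of $\mathbf{U}$, I would adaptively bisect $\mathbf{U}\setminus X_{0}$ into a finite collection of interval boxes $B_{k}$, and for each $B_{k}$ compute interval enclosures $f_{ij}(B_{k},m_{0})$ in a sharp (mean-value/centered) form; if $0\notin f_{ij}(B_{k},m_{0})$ for some index pair $(i,j)$, the box is discarded as zero-free. When the subdivision terminates with every leaf box either excluded or absorbed into $X_{0}$, the Krawczyk certificate in $X_{0}$ and the classical existence result of MacMillan--Bartky together yield exactly one convex central configuration in $\mathbf{U}$.

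The hard part will be making the exclusion step in a four-dimensional region tractable: uniform subdivision would produce a combinatorial explosion of boxes. To keep the computation finite I would rely on (i) sharp centered interval forms so that the enclosures $f_{ij}(B_{k},m_{0})$ do not overestimate badly, (ii) exploiting the geometric cut-offs baked into $\mathbf{U}$ by Theorem \ref{Theorem1} (which bound all mutual distances away from zero and from infinity, so that the Newtonian kernels $1/r_{ij}^{3}$ admit tight interval enclosures), and (iii) an adaptive bisection strategy that selects, for each box, the component $f_{ij}$ most easily bounded away from zero and that concentrates effort only near $x_{*}$. A secondary subtlety is verifying that the four-equation subsystem I chose has no spurious zeros coming from the two dropped equations; this can be handled by a final check that the remaining $f_{ij}$ also vanish at $x_{*}$ within interval precision.
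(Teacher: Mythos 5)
Your proposal is correct and follows essentially the same strategy as the paper: exhaustively cover the bounded admissible region $\mathbf{U}$ with interval boxes, discard boxes on which some $f_{ij}$ has an interval enclosure excluding zero, certify the surviving box with the Krawczyk inclusion $K\subset\mathrm{int}(X_0)$ for a four-equation subsystem, and invoke MacMillan--Bartky for existence. The only (immaterial) differences are organizational — the paper starts from a uniform $N_1^4$ grid rather than an \texttt{fsolve}-centered box, and it uses the subsystem $(f_{12},f_{13},f_{14},f_{23})$ with a final interval check that $f_{24}$ and $f_{34}$ contain zero on the certified enclosure.
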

The computer-assisted proof of this theorem relies on the interval analysis which we now turn to, and the algorithm will be given at the end of the next section. The precise meaning for "a meshed grid point" in the statement and the final proof of this Theorem \ref{MainTheorem} will be given in Step 2 of the proof of the main Theorem \ref{MainMainTheorem} in section \ref{sec5}. 

\begin{remark} For the convenience of implementing the interval arithmetic computation, we need to cover $\mathbf{U}$ by intervals. 
Let $$\underline{\mathbf{U}}=[0,1]\times[0.267,1.733]\times [0,1]\times[-1.733, -0.267]$$
and $$\bar{\mathbf{U}}=[-1,1]\times[0,1.733]\times [-1,1]\times[-1.733, 0].$$
Then $$\underline{\mathbf{U}}\subset \mathbf{U} \subset \bar{\mathbf{U}}.$$
For a given mass vector $m_0$ in Theorem \ref{MainTheorem}, we find a unique convex central configuration $x_0 \in \underline{\mathbf{U}}$ by using the Krawczyk operator. Extending the uniqueness of $m_0$ to a neighborhood $B(m_0,\epsilon)$ in the corresponding position  neighborhood $B(x_0, r)$, we check that any $x\in \bar{\mathbf{U}}\backslash B(x_0,r)$ is not a central configuration for any $m\in B(m_0,\epsilon).$ As a result, for any $m\in B(m_0,\epsilon)$, there exists a unique convex central configuration. 
 \end{remark}

\section{Interval Analysis and Krawczyk Operator}
 \subsection{Notations and Concepts of Interval Arithmetic}
 
 In order to determine whether the equations of central configuration $f_{ij}(x,m)=0$ have any zeros, we employ interval arithmetic to test whether the functions $f_{ij}(x,m)$ contain zeros within specific intervals. Although the concept of interval arithmetic/analysis has a long history, the development of algorithms and computer programs to implement it dates back to the 1960s. A very nice survey of interval arithmetic was given by Alefeld and Mayer \cite{AM} which we will follow as a reference to provide a brief introduction to this topic. For more information, readers are referred to \cite{AM} and \cite{MooreR}.
 
 Let $[a]=[\underline{a},\bar{a}]$ and $[b]=[\underline{b},\bar{b}]$ be real closed intervals. 
If $\underline{a}=\bar{a}$, i.e., if $[a]$ consists only of the number $a$, then we identify the interval $[a,a]$ with the real number $a$, i.e., $a=[a,a].$   Let $\circ$ be one of the basic arithmetic operations $\{+,-,\cdot, /\}.$ The corresponding operations for intervals $[a]$ and $[b]$ are defined by 
 \begin{equation}\label{Rule1}
 [a]\circ [b]=\{ a\circ b| a\in[a], b\in [b]\},
 \end{equation}
 where we assume $0\notin [b]$ in case of division.
 
One of the advantages of interval computations is the fact that $[a]\circ [b]$ can be represented by using only the bounds of $[a]$ and $[b]$. For example:
$$[a]+[b]=[\underline{a}+\underline{b}, \bar{a} +\bar{b}], \hspace{1cm} [a]-[b]=[\underline{a}-\bar{b}, \bar{a} -\underline{b}],$$
 $$[a]\cdot[b]=[\min\{ \underline{ab},\underline{a}\bar{b},\bar{a}\underline{b},\bar{a}\bar{b}\}, \max\{ \underline{ab},\underline{a}\bar{b},\bar{a}\underline{b},\bar{a}\bar{b}\}].$$
 If we define $\frac{1}{[b]}=\left\{ \frac{1}{b}|b\in[b]\right\}$ for $0\notin [b]$, then $[a]/[b]=[a]\cdot \frac{1}{[b]}.$  
 
 Standard interval functions $\varphi \in \Psi=\{ \sin,$ $ \cos$, $\tan,$ $\arctan$, $\hbox{exp}, $ $\hbox{ln}$, $\hbox{abs},$ $\hbox{sqr},\hbox{sqrt}\}$ are defined via their range  
 \begin{equation}\label{Rule2}
 \varphi([x])=\{\varphi(x)|x\in[x]\}.
 \end{equation} 

 Suppose that $f: D \subseteq \mathbb{R} \rightarrow \mathbb{R}$ is defined by a mathematical expression $f(x)$ involving a finite number of elementary operations $+, -, \cdot, /$ and standard functions $\varphi\in\Psi$. If we replace the variable $x$ by an interval $[x] \subseteq D$ and evaluate the resulting expression using the rules in \eqref{Rule1} and \eqref{Rule2}, the output is again an interval, denoted by $f([x])$. This is commonly referred to as the interval arithmetic evaluation of $f$ over $[x]$. We assume that $f([x])$ exists whenever it is used in this paper, without explicitly stating so each time.
 
Observed that 
 $$[x]\subseteq [y] \Rightarrow f([x])\subseteq f([y]) \hbox{ and } x\in [x] \Rightarrow f(x)\in f([x]),$$
 it is easy to prove the following useful property 
 $$R(f;[x])\subseteq f([x]),$$ 
 where $R(f;[x])$ denotes the range of $f$ over $[x]$.
 
 The above fundamental property of interval arithmetic states that we can calculate lower and upper bounds for the range of a function over an interval using only the bounds of the interval itself. However, when we use the interval arithmetic evaluation $f([x])$, we may overestimate this range. The accuracy of the approximation to the range of $f$ over an interval $[x]$ strongly depends on how the expression for $f(x)$ is formulated. To illustrate these differences, we construct the Example \ref{example2} to compare the interval arithmetic evaluation $f([x])$ with the range $R(f;[x])$ in different expressions, and to demonstrate that the false statement that $f(x)$ contains zeros can arise due to the overestimation of $f([x])$.

 \begin{example}\label{example2} Let $f(x)$ be the rational function $$f(x)=\frac{2x^2}{x^2+9}-1.$$ 
\begin{itemize}
    \item  Observed that $f^\prime (x)=\frac{36x}{(x^2+9)^2}$,  $f(x)$ is increasing in the interval $[x]=[1,4]$. Therefore 
    $R(f,[1, 4]) = [-0.8, 0.28]$. It is easy to see that the equation $f(x)=0$ has a unique solution $x=3$ in this interval.
    \item By direct computations, $$f([1,4])=\frac{2\cdot [1,4]^2}{[1,4]^2+9}-1= \frac{2\cdot [1,16]}{[1,16]+9}-1 =\frac{[2,32]}{[10,25]}-1=[2,32]\cdot \frac{1}{[10,25]}-1$$
    $$ =[2,32]\cdot \left[\frac{1}{25},\frac{1}{10}\right]-1 =\left[\frac{2}{25},\frac{32}{10}\right]-1=[-0.92,2.2]\supseteq [-0.8,0.28],$$
     which confirms that  $R(f;[x])\subseteq f([x]).$ On the other hand, for $x\not=0$ we can rewrite $f(x)$ as 
    $$f(x)=\frac{2}{1+9/x^2}-1.$$
     $$f([1,4])=\frac{2}{1+9/[1,4]^2}-1=\frac{2}{1+9/[1,16]}-1=\frac{2}{1+[9/16,9]}-1$$
     $$=\frac{2}{[25/16,10]}-1=[2/10, 32/25]-1=[-8/10,7/25]=[-0.8, 0.28],$$
     which implies that $f([1,4])=R(f,[1,4]).$ This shows that the interval arithmetic evaluation depends on its expressions. 
     \item Now let us consider two sub-intervals   $[x_1]=[1,5/2]$, $[x_2]=[5/2,4]$.
 By simple computations we have
 $$f([x_1])=f([1,5/2])=\frac{2\cdot [1,5/2]^2}{[1,5/2]^2+9}-1=\frac{[2,25/2]}{[10,61/4]}-1=[-53/61,1/4].$$
 $$f([x_2])=f([5/2,4])=\frac{2\cdot [5/2,4]^2}{[5/2,4]^2+9}-1=\frac{[25/2,32]}{[61/4,25]}-1=[-1/2,67/61].$$
 The equation $f(x)=0$ may have solutions in both $[x_1]$ and $[x_2]$. But we can exclude the interval $[x_1]$ by splitting it into two smaller intervals $[x_{11}]=[1,2]$ and $[x_{12}]=[2,5/2]$. 
 $$ f([x_{11}])=f([1,2])=\left[-\frac{11}{13},-\frac{1}{5}\right],\hspace{1cm} f([x_{12}])=f([2,5/2])=\left[-\frac{29}{61},-\frac{1}{26}\right],$$
 which shows that $f(x)=0$ has no solutions in the intervals $[x_{11}]$ and $[x_{12}].$ Therefore there is no solutions in $[x_1]=[x_{11}]\bigcup [x_{12}].$ 
\end{itemize}
 \end{example}

Note that our functions $f_{ij}$ only involve basic arithmetic operations and square root. We employ interval arithmetic evaluation to determine if $f_{ij}$ contains zeros in given intervals. Example \ref{example2} demonstrates how we can rule out the existence of zeros in $[x]$, as well as an example of a false statement of zeros in $[x]$ due to $R(f;[x])\subseteq f([x])$. 

Interval arithmetic has been implemented on many platforms such as INTLAB and SageMath. INTLAB is a MATLAB library for interval arithmetic routines and it is written in MATLAB. SageMath is a free open-source mathematics software systems. The implementation of our computations on interval arithmetic is on SageMath 9.6 (published in 2022). Now let us introduce the Krawczyk operator which is an extension of Newton's method to search zeros of nonlinear system by incorporating interval computations.

 \subsection{Krawczyk Operator}
 
 Let $F: \mathbf{R}^n\rightarrow \mathbf{R}^n$ be a $C^1$-map. The Krawczyk operator (Krawczyk 1969;  Neumeier  1990; Alefeld 1994) is an interval analysis tool to establish the existence and uniqueness of zero for a system of $n$ nonlinear equations in the same number $n$ of variables: 
\begin{equation}
F(x)=0.
\end{equation}
The method proposed by Krawczyk for finding zeros of $F$ is as follows. Set 
\begin{itemize}
\item $[x]\subset \mathbf{R}^n$ be an interval set (i.e., Cartesian product of intervals).
\item $x_0 \in [x].$ Typically $x_0$ is chosen to be the midpoint of $[x]$. We will denote this by $x_0=mid([x])$.
\item $C\in \mathbf{R}^{n\times n}$ be a linear isomorphism. 
\end{itemize}
The Krawczyk operator is defined to be 
\begin{equation}
K(x_0, [x], F):=x_0-C\cdot F(x_0)+(I -C\cdot dF([x])) ([x]-x_0).
\end{equation}
The motivation and concise derivation of Krawczyk operator can be found in \cite{MZ}, where the operator was used to study the existence of central configurations for planar $n$-body problem with equal masses for $n=5,6,7$. More detailed discussion is referred to \cite{MooreR}.

\begin{lemma}\label{KO}
\begin{enumerate}
\item If $x^*\in  [x] $ and $F(x^*)=0$, then $x^* \in K(x_0, [x], F). $
\item If  $K(x_0, [x], F) \subset int[x],$ then there exists in $[x]$ exactly one solution of equation $F(x)=0$. This solution is non-degenerate, i.e., $dF(x)$ is an isomorphism. 
\item If $K(x_0, [x], F) \cap [x] =\emptyset$, then $F(x)\not= 0$ for all $x\in [x].$
\end{enumerate}
\end{lemma}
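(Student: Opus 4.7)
The plan is to derive all three parts from a single identity obtained by applying the mean value theorem componentwise to $F$. For any $y\in[x]$ and any base point $z\in[x]$, one has $F(y)-F(z) = M(y,z)\,(y-z)$ for a slope matrix $M(y,z)$ whose $i$-th row is $\nabla F_i$ evaluated somewhere on the segment from $z$ to $y$; in particular $M(y,z)\in dF([x])$ in the interval-matrix sense. Combined with the basic inclusion property of interval arithmetic (if $x\in[x]$ then $g(x)\in g([x])$ for any expression $g$), this identity will drive everything.

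For Part (1), applying the identity with $z=x_0$ and using $F(x^*)=0$ gives $F(x_0)=-M(x^*,x_0)(x^*-x_0)$, so by direct substitution $x^* = x_0 - C\,F(x_0) + (I - C\,M(x^*,x_0))(x^*-x_0)$, and the right-hand side lies in $K(x_0,[x],F)$ by the inclusion property. Part (3) is then the contrapositive of Part (1): a hypothetical zero $x^*\in[x]$ would also lie in $K(x_0,[x],F)$, contradicting $K(x_0,[x],F)\cap[x]=\emptyset$.

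Part (2) is the substantive statement. I would introduce the Krawczyk--Newton map $G(x):=x-C\,F(x)$, whose fixed points coincide with the zeros of $F$ since $C$ is an isomorphism. The same MVT identity yields $G(y) = x_0 - C\,F(x_0) + (I - C\,M(y,x_0))(y-x_0)\in K(x_0,[x],F)\subset \mathrm{int}\,[x]$ for every $y\in[x]$. Hence $G$ maps $[x]$ continuously into its interior, and Brouwer's fixed point theorem produces a zero of $F$ in $[x]$. For uniqueness and non-degeneracy, I would extract from the strict inclusion $K\subset\mathrm{int}\,[x]$ the componentwise bound $|I-C\cdot dF([x])|\,w([x])<w([x])$ on interval-matrix magnitudes (with $w$ denoting width), which forces the Perron spectral radius of $|I-C\cdot dF([x])|$ to be strictly less than $1$. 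This implies that every point matrix $M\in dF([x])$ satisfies $\rho(I-CM)<1$, so $CM$, and in particular $dF(x)$, is invertible for all $x\in[x]$. Two zeros $x_1^*,x_2^*$ would then satisfy $x_1^*-x_2^* = (I-CM')(x_1^*-x_2^*)$ via the identity applied at $z=x_2^*$, equivalently $CM'(x_1^*-x_2^*)=0$ with $M'$ invertible, forcing $x_1^*=x_2^*$.

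The step I expect to be the main obstacle is this spectral-radius passage in the uniqueness argument: converting the geometric inclusion $K\subset\mathrm{int}\,[x]$ into an algebraic contraction statement on interval matrices requires careful book-keeping of widths and magnitudes, and is the one place where one must appeal to the standard interval-analysis machinery (as developed in Neumaier's monograph and in \textbf{Alefeld--Mayer}) rather than to straight algebra. Everything else reduces, once the mean value identity and the inclusion property are in hand, to short algebraic manipulations.
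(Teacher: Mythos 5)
Your argument is correct, and it is the standard textbook proof of the Krawczyk test; note, however, that the paper itself offers no proof of this lemma at all --- it simply quotes the statement and refers the reader to the literature (Krawczyk, Neumaier, Alefeld, and the expositions in \cite{MZ} and \cite{MooreR}), so there is no in-paper argument to compare routes with. All the pieces of your sketch check out: the componentwise mean value identity puts the slope matrix $M(y,z)$ row-by-row inside the interval Jacobian $dF([x])$ because the box is convex, which gives parts (1) and (3) by inclusion monotonicity, and the Krawczyk--Newton map $G(y)=y-CF(y)$ sends $[x]$ into $K(x_0,[x],F)\subset\mathrm{int}[x]$, so Brouwer yields existence. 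The step you flagged as the main obstacle also goes through cleanly: $K\subset\mathrm{int}[x]$ forces $w\bigl((I-C\,dF([x]))([x]-x_0)\bigr)<w([x])$ componentwise, the elementary width estimate $w([M][v])\ge |[M]|\,w([v])$ then gives $|I-C\,dF([x])|\,w([x])<w([x])$ with $w([x])>0$, and Perron--Frobenius yields $\rho\bigl(|I-C\,dF([x])|\bigr)<1$, hence $\rho(I-CM)<1$ and invertibility of $CM$ (and of $C$ and $M$) for every point matrix $M\in dF([x])$; uniqueness and non-degeneracy follow as you state. The only hypotheses you should make explicit are that $F$ is $C^1$ on the whole box and that the box is non-degenerate (otherwise $\mathrm{int}[x]=\emptyset$ and part (2) is vacuous), both of which hold in the paper's application.
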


\begin{remark}
\begin{itemize}
\item For the concrete Krawczyk operator used in our problem the parameter $m$ is not considered as an interval, and we employ IFT to cover the neighboring mass points. The referee suggests that treating the masses parameter as an interval in the Krawczyk operator may be more effective than the IFT-based approach. Currently, it is unclear to us whether this would indeed be the case. However, we find that the proof using IFT is conceptually more appealing.

\item In the Krawczyk operator, $C$ is usually taken to be the inverse of $dF(x_0)$ which is updated in each iteration. If $dF(x^*)$ is singular, we can not use the Krawczyk operator. 
\item The point 2 in the above lemma gives us the way to establish the existence of unique zero of $F$ in $[x]$, whilst the point 3 rules out the existence of zeros in $[x]$. There is a quite common case that $[x]\subseteq K(x_0,[x],F)$ when the diameter of the interval $[x]$ is not small enough. In this case Krawczyk operator won't give us some useful information. In our program, we just simply split it into two small subintervals by bisecting the longest side of the interval.

\item We use Sagemath 9.6 to implement the interval computations and set RIF= RealIntervalField( ).
\end{itemize}
\end{remark}
The mass vectors are formulated by fixing $m_2=1$ and taking $m_i$ at $\delta_0+\frac{(1-\delta_0) k}{M_1}$ for $k=0,\cdots,$ $ M_1$ and $i=1,3,4, \delta_0=0.6$. Our program run through all the cases.  We are able to prove that there is a unique convex central configuration for each given mass vector. 
We first use the interval arithmetic computation and the Krawczyk operator to prove Theorem \ref{MainTheorem}. 

\subsection{The Algorithm to Prove Theorem \ref{MainTheorem}}
Take $m_0=[m_{01},1,m_{03},m_{04}]$ and $0< m_{0i}\leq 1$ at the meshed grid points at $\delta_0+\frac{(1-\delta_0) k}{M_1}$.  The notation $f_{ij}(x)=f_{ij}(x,m_0)$ is used for each fixed $m_0$.

The algorithm consists of four steps:

{\bf Step 1.} For the convenience of interval arithmetic computation, we use the interval $\underline{\mathbf{U}}$ such that  $[x_1,y_1,x_3,y_3] \in \underline{\mathbf{U}}$ for the admissible set $\mathbf{U}$ , i.e.  
$$x_1\in [0,1], y_1\in [0.267, 1.733], x_3\in [0,1], \hbox{ and } y_3\in [-1.733,-0.267]. $$
 We split the configuration space into small intervals. Each side is divided into $N_1$ equal intervals. The typical initial interval is $[x]_k=[x_{1}]_{k_1}\times [y_{1}]_{k_2}\times [x_{3}]_{k_3}\times [y_{3}]_{k_4}$ where $[x_1]_{k_1}=[\frac{k_1}{N_1},\frac{k_1+1}{N_1}]$ refers to the $k_1$-th subinterval for $x_1$ and the others are similar.  
 
{\bf Step 2.} The initial interval $[x]_k$ is ruled out if one of the six functions $f_{ij}([x]_k)$ doesn't contain zero. The rest intervals are candidates for the existence of solutions and they are labeled as $[x]_k$, $k=0,1,2,\cdots, k_{end}$. If no intervals are left, then there is no solution for the given mass in the given configuration space and we stop. Otherwise, go to next step.  

{\bf Step 3.}  
 The Krawczyk operator is used as a part of iteration process for each interval $[x]_k$ from Step 2: 
 
 Let $F(x)=[f_{12}(x), f_{13}(x), f_{14}(x),f_{23}(x)].$
\begin{enumerate} 
    \item given $[x]_k\subseteq \underline{\mathbf{U}}\subset \mathbf{R}^4$
    \item Compute $[y]=K(mid[x]_k,[x]_k,F)$
    \item if $[y]\subset int[x]_k$ and the diameter $d[y]=\bar{y}-\underline{y}$ of $[y]$ is smaller than the error tolerance, then return  {\bf success and $[y]$ and go to Step 4}
    
    elseif $[y]\subset int[x]_k$ and the diameter $d[y]=\bar{y}-\underline{y}$ of $[y]$ is not smaller than the error tolerance, then {\bf go to 2 by replacing $[x]_k$ by $[y]$}
    
    elseif $[x]_k\subset [y]$, then {\bf bisect the longest side of the interval $[x]_k$ and go to   Step 2 for the two smaller intervals}
    
    elseif $[x]_k\cap [y]=\emptyset$, then return {\bf no solution in $[x]_k$}
    
    elseif $[x]_k\cap [y]\not=\emptyset$, then {\bf set $[x]_{k+1}:=[y]\cap [x]_k$ and go to Step 2}
\end{enumerate}

{\bf Step 4.} If Step 3 returns {\bf success and $[y]$}, $f_{24}([y])$ and $f_{34}([y])$ are evaluated. If both contain zero, then $[y]$ is a central configuration for the given mass vector $m_0$ and it returns a {unique solution $[y]$}. 

In SageMath 9.6, there are two styles for printing intervals: ‘brackets’ style and ‘question’ style.
In question style, we print the “known correct” part of the number, followed by a question mark. The question mark indicates that the preceding digit is possibly wrong by $\pm 1.$ In the report of the computation results, we may use either of them.

If only one solution is obtained at the end of the program, it implies the existence of a unique central configuration in the given order as in Figure \ref{CC3}. Two examples are provided to demonstrate the numerically rigorous proof of the existence and uniqueness of the convex central configurations for a given mass.

\begin{example}
For $m_0=[1,1,1,1]$, there is a unique convex central configuration and 
$x_1=-1.?e-20,$ $y1=1.00000000000000000000?$, $x_3=-1.?e-20$, \\$y_3=-1.00000000000000000000?.$
\end{example}
In this case, we take $N_1=20$. Each side of the configuration is partitioned into 20 smaller intervals with length about $0.05$ to $0.0733$. There are $20^4=160,000$ intervals in total to check in Step 2. Most of them are ruled out because at least one of the six functions $f_{ij}$ contains no zero. Among them only 52 intervals are left for Krawczyk operator in Step 3. Then only one of these intervals 
returns the above unique solution and others return no solution. We also note that no bisection is needed for these intervals. So we conclude that there is a unique central configurations for this mass $m_0$ in the given ordering. 
{This way we recover the well known result (\cite{A96}): The square is the only convex central configuration in the planar $4$-body problem with equal masses.}

\begin{example}\label{ex4} For $m_0=[0.2, 1, 0.3, 0.4]$, there is a unique convex central configuration and $$x_1\in [0.15385328707521665441880634609065, 0.15385328707521665448042670172252],$$
$$y_1\in [1.4086619698548151890849667722929, 1.4086619698548151891412057380232],$$ 
$$x_3\in [0.11611158428853550145374041262273, 0.11611158428853550155068635233120],$$ 
$$y_3\in [-1.484878202646704369219144317714, -1.484878202646704369133519439332].$$
\end{example}
In this case, we take $N_1=15$. Each side of the configuration is partitioned into 15 smaller intervals with length about $0.0667$ to $0.0977$. There are totally $15^4=50,625$ intervals to check in Step 2.  Only 62 intervals are left for Krawczyk operator in Step 3. But this time, 60 of them return no solutions. Two of them 
are bisected their intervals and went back to Step 2.  Only one of them returns the above unique solution and others return no solution. We still have the conclusion that there is a unique central configurations for this mass $m_0$ in the given ordering. 

This completes the proof of Theorem \ref{MainTheorem} for the given mass $m_0$ at the meshed grid points. 
To get the proof for all masses, we have to fill in the gaps among the discrete masses by the Cartesian product of intervals. Here we employ implicit function theorem to produce such domains. Note that for any central configuration $x_0=x_0(m_0)=[x_{10},y_{10},x_{30}, y_{30}]$ with the corresponding mass $m_0$, $F(x_0,m_0)=0$.  By implicit function theorem, if a nondegeneracy condition on $(x_0,m_0)$ is fulfilled, there exist two radius $r>0$ and $\epsilon>0$, and a unique function $x=x(m)$ in the neighborhood $B(x_0,r)$ of $x_0$ and $B(m_0, \epsilon)$ of $m_0$ such that $x_0=x(m_0)$ and $F(x(m),m)\equiv 0$. Since $m_2=1$ is fixed once and for all, $m_0$ means $m_0=(m_{10}, m_{30}, m_{40})$ when $m_0$ is considered as a variable.  Hence $$B(m_0,\epsilon)=\{(m_1,m_3,m_4)| \sqrt{(m_1-m_{10})^2+(m_3-m_{30})^2+(m_4-m_{40})^2}<\epsilon\}.$$ 
 In the next section, we will refine the implicit function theorem to estimate the radius $r$ and $\epsilon$ of the neighborhoods of the concerning point. For each such mass neighborhood, we prove the existence and uniqueness of convex central configuration. 

\section{Implicit Function Theorem: existence, uniqueness and size of valid domain}\label{sec4}

Before we introduce the implicit function theorem (IFT) and its applications to our case, we first briefly go over the notations and the definitions that are used in this paper following \cite{JCB}, especially the norm we used should be consistent. For $x=(x_1,x_2,\cdots, x_n) \in \mathbb{R}^n$,  $\|x\|=\sqrt{x_1^2+x_2^2+\cdots +x_n^2}$ is the standard Euclidean norm. For a given real number $r>0$ and $x_0\in \mathbb{R}^n,$
$$B(x_0,r)=\{x\in\mathbb{R}^n | \|x-x_0\|<r\}$$
denotes the open ball of radius $r$ centered at $x_0.$  For given positive integers $n_1, n_2 \in \mathbb{N}$, the norm $\|A\|$ of a linear map $A:\mathbb{R}^{n_1}\rightarrow \mathbb{R}^{n_2}$ is defined by
$$\|A\|:=\sup\{\|Ax\| |x\in B(0,1)\subseteq \mathbb{R}^{n_1}\},$$
which is equivalent to $$\|A\|=\sup \{\|Ax\| |x\in \mathbb{R}^{n_1} \hbox{ and } \|x\|=1\} \hbox{ or } \|A\|=\sup \left\{\frac{\|Ax\|}{\|x\|} |x\in \mathbb{R}^{n_1}\right\}$$ 
with $\|Ax\|$ the norm of $Ax$ in $\mathbb{R}^{n_2}$. The standard Euclidean norm is easily computed in Matlab by using the command \lq norm\rq.

For our later purpose, let 
\begin{eqnarray*}
   f:\mathbb{R}^{n_1}\times \mathbb{R}^{n_2} &\rightarrow& \mathbb{R}^{n_1} \\
   (x,m) &\mapsto& f(x,m)
\end{eqnarray*} 
be a multivariable vector-valued function. $f$ has the form $f=(f_1,f_2,\cdots, f_{n_1})$ with components $f_1, f_2,\cdots, f_{n_1}:\mathbb{R}^{n_1}\times \mathbb{R}^{n_2} \rightarrow \mathbb{R}$ multivariable real-valued functions of $(x,m)$. 
For our problem $f(x,m)$ is a function of position $x$ with mass parameter $m$. $D_xf(x_0,m_0)$ and $D_mf(x_0,m_0)$ are Jacobian matrices of $f$ with respect to $x$ and $m$ respectively.

The second derivatives of $f$ with respect to $x$ is an array of $n_1$ Hessian matrices one for each component $f_j$:  
$$D_{x}^2 f=[H_x(f_1), H_x(f_2), \cdots, H_x(f_{n_1})],$$
whose norm $\|D_{x}^2 f(x_0,m_0)\|$ is computed as
$$\|D_{x}^2 f(x_0,m_0)\|= \sup\{ \| D_x^2 f(x_0,m_0) (v,w)\| | v\in \mathbb{R}^{n_1}, w\in \mathbb{R}^{n_1} \hbox{ and } \|v\|=\|w\|=1\}. $$
The second derivative $D_xD_mf(x_0,m_0)$ of $f$ with respect to variables $m$ and then variable $x$ is defined similarly.

Now it is ready to state IFT. 

\subsection{Implicit Function Theorem}
We start with a useful lemma.

\begin{lemma}\label{cont}
Given a continuous map $\varphi: \mathrm{cl}\mathrm{B}(y_{0},r)\times U \rightarrow \mathbb{R}^{n_{1}}$ with $\mathrm{cl}\mathrm{B}(y_{0},r)\subset\mathbb{R}^{n_{1}}$ a closed ball and $U \subset\mathbb{R}^{n_{2}}$ an open subset and a fixed point $x_0\in U$, the function 
\begin{eqnarray*}
    l:U &\rightarrow& \mathbb{R} \\
      x &\mapsto& l(x):=\max_{y\in \mathrm{cl}\mathrm{B}(y_{0},r)}\Vert\varphi(y,x)-\varphi(y,x_{0})\Vert
\end{eqnarray*} is continuous on $U$.
\end{lemma}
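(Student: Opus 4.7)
The plan is a standard reverse-triangle-inequality argument powered by uniform continuity of $\varphi$ on a suitable compact subset of its domain. Fix an arbitrary $x_1\in U$; I want to show $l$ is continuous at $x_1$. Before anything else, note that for each fixed $x\in U$ the map $y\mapsto \|\varphi(y,x)-\varphi(y,x_0)\|$ is continuous on the compact set $\mathrm{cl}\mathrm{B}(y_0,r)$, so the maximum defining $l(x)$ is attained and $l$ is well-defined on $U$.

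First, I would use openness of $U$ to pick $\rho>0$ so that $\mathrm{cl}\mathrm{B}(x_1,\rho)\subset U$. Then the product $K:=\mathrm{cl}\mathrm{B}(y_0,r)\times \mathrm{cl}\mathrm{B}(x_1,\rho)$ is a compact subset of $\mathbb{R}^{n_1}\times\mathbb{R}^{n_2}$ on which $\varphi$ is continuous, hence uniformly continuous. Given any $\varepsilon>0$, uniform continuity yields a $\delta\in(0,\rho]$ such that
\[
\|\varphi(y,x)-\varphi(y,x_1)\|<\varepsilon\qquad\text{for every }y\in\mathrm{cl}\mathrm{B}(y_0,r)\text{ whenever }\|x-x_1\|<\delta.
\]

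Second, I would apply the triangle inequality in both directions. For any such $x$ and any $y\in\mathrm{cl}\mathrm{B}(y_0,r)$,
\[
\|\varphi(y,x)-\varphi(y,x_0)\|\le \|\varphi(y,x)-\varphi(y,x_1)\|+\|\varphi(y,x_1)-\varphi(y,x_0)\|<\varepsilon+l(x_1),
\]
and symmetrically
\[
\|\varphi(y,x_1)-\varphi(y,x_0)\|\le \|\varphi(y,x_1)-\varphi(y,x)\|+\|\varphi(y,x)-\varphi(y,x_0)\|<\varepsilon+l(x).
\]
Taking the maximum over $y\in\mathrm{cl}\mathrm{B}(y_0,r)$ in each inequality gives $l(x)\le l(x_1)+\varepsilon$ and $l(x_1)\le l(x)+\varepsilon$, whence $|l(x)-l(x_1)|\le\varepsilon$. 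Since $x_1\in U$ and $\varepsilon>0$ were arbitrary, $l$ is continuous on $U$.

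There is no real obstacle here; the only thing to be careful about is securing a compact neighborhood of $x_1$ inside $U$ (which is why openness of $U$ is needed) so that uniform continuity of $\varphi$ can be invoked simultaneously over all $y$ in the closed ball. Everything else is the textbook reverse-triangle trick used to prove continuity of a pointwise-sup/max of an equicontinuous family.
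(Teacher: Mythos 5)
Your proof is correct and follows essentially the same route as the paper's: the two-sided triangle inequality yields $|l(x)-l(x_1)|\leq\max_{y\in\mathrm{cl}\mathrm{B}(y_0,r)}\Vert\varphi(y,x)-\varphi(y,x_1)\Vert$, and uniform continuity of $\varphi$ on the compact product $\mathrm{cl}\mathrm{B}(y_0,r)\times\mathrm{cl}\mathrm{B}(x_1,\rho)$ finishes the argument. Your explicit remarks that the maximum is attained and that $\delta$ must be taken at most $\rho$ are small tidiness improvements over the paper's write-up, but the argument is the same.
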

\begin{proof} By triangle inequality, for any $x_{1}\in U$
\begin{eqnarray*}
\|\varphi(y,x)-\varphi(y,x_0)\| &\leq& \|\varphi(y,x)-\varphi(y,x_1)\|+\|\varphi(y,x_1)-\varphi(y,x_0)\|\\
&\leq & \max_{y\in \mathrm{cl}\mathrm{B}(y_{0},r)}\|\varphi(y,x)-\varphi(y,x_1)\|+\max_{y\in \mathrm{cl}\mathrm{B}(y_{0},r)}\|\varphi(y,x_1)-\varphi(y,x_0)\|\\
&\leq & \max_{y\in \mathrm{cl}\mathrm{B}(y_{0},r)}\|\varphi(y,x)-\varphi(y,x_1)\|+l(x_1).
\end{eqnarray*}
So $$l(x)=\max_{y\in \mathrm{cl}\mathrm{B}(y_{0},r)}\|\varphi(y,x)-\varphi(y,x_0)\|\leq  \max_{y\in \mathrm{cl}\mathrm{B}(y_{0},r)}\|\varphi(y,x)-\varphi(y,x_1)\|+l(x_1),$$
that is
$$l(x)-l(x_1)\leq \max_{y\in \mathrm{cl}\mathrm{B}(y_{0},r)}\|\varphi(y,x)-\varphi(y,x_1)\|.$$
Similarly, we have
$$l(x_1)-l(x)\leq \max_{y\in \mathrm{cl}\mathrm{B}(y_{0},r)}\|\varphi(y,x_1)-\varphi(y,x)\|.$$ 
So $\forall x_{1}\in U$, we have \[\vert l(x)-l(x_{1})\vert\leq\max_{y\in\mathrm{cl}\mathrm{B}(y_{0},r)}\Vert\varphi(y,x)-\varphi(y,x_{1})\Vert.\] 
Take a closed ball $\mathrm{cl}\mathrm{B}(x_{1},r_{x_{1}})\subset U$, then $\varphi$ is uniformly continuous on $\mathrm{cl}\mathrm{B}(y_{0},r)\times\mathrm{cl}\mathrm{B}(x_{1},r_{x_{1}})$. Thus, $\forall\epsilon>0$, $\exists\delta>0$, if $\Vert(y_{3},x_{3})-(y_{2},x_{2})\Vert<\delta$, then $\Vert\varphi(y_{3},x_{3})-\varphi(y_{2},x_{2})\Vert<\epsilon$. So for $x\in U$ such that $\Vert x-x_{1}\Vert<\delta$, we have $\Vert(y,x)-(y,x_{1})\Vert<\delta$, $\Vert\varphi(y,x)-\varphi(y,x_{1})\Vert<\epsilon$ for any $y\in \mathrm{cl}\mathrm{B}(y_{0},r)$ which means that 
$$|l(x)-l(x_1)|<\epsilon.$$
The proof is complete.
\end{proof}

Now we use the lemma to prove a version of implicit function theorem used on the estimation of the valid domain.


\begin{theorem}
Let $Y\subset\mathbb{R}^{n_{1}}$, $X\subset\mathbb{R}^{n_{2}}$ be nonempty open sets. Let $Y\times X\owns(y,x)\mapsto f(y,x)\in\mathbb{R}^{n_{1}}$ be a $\mathscr{C}^{1}$ map. For $(y_{0},x_{0})\in Y\times X$, suppose $D_{y}f(y_{0},x_{0})$ is invertible. Then for any neighborhood $\mathscr{O}(y_{0})$ of $y_{0}$, there is a neighborhood $\mathscr{O}(x_{0}) $ of $x_{0}$ and a unique continuous map $g:\mathscr{O}(x_{0})\rightarrow\mathscr{O}(y_{0})$ such that

1. $g(x_{0})=y_{0}$,

2. $f(g(x),x)=f(y_{0},x_{0})$ for all $x\in\mathscr{O}(x_{0})$,

3. $g$ is continuous at point $x_{0}$.
\end{theorem}
\begin{proof} {The proof is based on Banach Fixed-Point Theorem.}

Let $T:=D_{y}f(y_{0},x_{0})$ and $\omega_{0}=f(y_{0},x_{0})$. Define 
\begin{eqnarray*}
   \varphi: Y\times X &\rightarrow & \mathbb{R}^{n_{1}}\\
    (y,x)&\mapsto& \varphi(y,x):=y-T^{-1}(f(y,x)-\omega_{0}).
\end{eqnarray*} 
Then $\varphi(y,x)=y$ iff $f(y,x)=\omega_{0}$. We will show that $\varphi(\cdot,x)$ has a fixed point. From the definition of $\varphi$, we have \[D_{y}\varphi(y,x)=I-T^{-1}D_{y}f(y,x).\] 
Since {$D_{y}\varphi(y,x)$} is continuous and $D_{y}\varphi(y_{0},x_{0})=0$, there exists  $r>0$ such that { $\Vert D_{y}\varphi(y,x)\Vert\leq \frac{1}{2}$ for any $y\in \text{cl}\text{B}(y_{0},r)$ and $x\in\mathrm{B}(x_{0},\epsilon_{1})$.} By the Mean Value Inequality, 
\begin{align*}
\Vert\varphi(y,x_{0})-y_{0}\Vert &=\Vert\varphi(y,x_{0})-\varphi(y_{0},x_{0})\Vert\\
& \leq\sup_{0\leq t\leq 1}\Vert D_{y}\varphi((1-t)y_{0}+ty,x_{0})\Vert\Vert y-y_{0}\Vert\\
&\leq\frac{1}{2}\Vert y-y_{0}\Vert\leq\frac{r}{2}, \forall y\in\mathrm{cl}\mathrm{B}(y_{0},r).\\
\end{align*}
For each $x$, let $l(x):=\max_{y\in \text{cl}\text{B}(y_{0},r)}\Vert\varphi(y,x)-\varphi(y,x_{0})\Vert$ as in the previous Lemma \ref{cont}. Since $l(x)$ is continuous by the lemma and $l(x_{0})=0$, there exists {$\epsilon\in (0,\epsilon_{1})$} such that\[l(x)<\frac{r}{2},\,\, \forall x\in\text{B}(x_{0},\epsilon).\] For each $x\in\text{B}(x_{0},\epsilon)$, we have \[\Vert\varphi(y,x)-y_{0}\Vert\leq\Vert\varphi(y,x)-\varphi(y,x_{0})\Vert+\Vert\varphi(y,x_{0})-y_{0}\Vert<\frac{r}{2}+\frac{r}{2}=r, \,\, \forall y\in\text{cl}\text{B}(y_{0},r).\] Thus, for each $x\in\text{B}(x_{0},\epsilon)$, $\varphi(\cdot,x)$ maps \[\text{cl}\text{B}(y_{0},r)\owns y\mapsto\varphi(y,x)\in\text{cl}\text{B}(y_{0},r).\] {For $y_{1}$, $y_{2}\in\mathrm{cl}\mathrm{B}(y_{0},r)$ and $x\in\mathrm{B}(x_{0},\epsilon)$,\[\Vert\varphi(y_{1},x)-\varphi(y_{2},x)\Vert\leq\max_{0\leq t\leq 1}\Vert D_{y}\varphi((1-t)y_{1}+ty_{2},x)\Vert\Vert y_{1}-y_{2}\Vert\leq\frac{1}{2}\Vert y_{1}-y_{2}\Vert.\]} 
Hence by {Banach} Fixed-Point Theorem, for each $x\in\text{B}(x_{0},\epsilon)$, there exists a $g(x)\in\text{cl}\text{B}(y_{0},r)$ such that $\varphi(g(x),x)=g(x)$ or equivalently $f(g(x),x)=\omega_{0}$. 
\end{proof}

Based on the proof of the implicit function theorem, one can get the following estimate about the size of the valid domain.

\begin{theorem}(\cite{JCB}, Proposition 3.12)\label{Imp1}
Let $Y\subset\mathbb{R}^{n_{1}}$ and $X\subset\mathbb{R}^{n_{2}}$ be two nonempty open sets. Let $f:Y\times X\rightarrow\mathbb{R}^{n_{1}}$ be a $\mathscr{C}^{2}$ map. For some $(y_{0},x_{0})\in Y\times X$, $D_{y}f(y_{0},x_{0})$ is invertible. Define\[L_{x}:=\Vert D_{x}f(y_{0},x_{0})\Vert ,\ M_{y}:=\left\Vert(D_{y}f(y_{0},x_{0}))^{-1}\right\Vert\] and for any given $R_{1}>0$, $R_{2}>0$, define
\begin{align*}
K_{yy}&:=\sup_{y\in\mathrm{B}(y_{0},R_{1})}\left\Vert D_{y}^{2}f(y,x_{0})\right\Vert, \\
K_{xx}&:=\sup_{(y,x)\in\mathrm{B}((y_{0},x_{0}),R_{2})}\left\Vert D_{x}^{2}f(y,x)\right\Vert, \\
K_{xy}&:=\sup_{(y,x)\in\mathrm{B}((y_{0},x_{0}),R_{2})}\left\Vert D_{x}D_{y}f(y,x)\right\Vert. \\
\end{align*}
Then for all $r\leq\min\{P,R_{2}\}$ with $P:=\min\left\{\frac{1}{2M_{y}K_{yy}},R_{1}\right\}$, and {$\epsilon\leq \min\left\{\sqrt{R_{2}^{2}-r^2},\frac{1}{4M_{y}K_{xy}}\right\}$}
 satisfying \[M_{y}(L_{x}+K_{xy}r+K_{xx}\epsilon)\epsilon<\frac{r}{2},\] $\forall x\in\mathrm{B}(x_{0},\epsilon)$ there exists a {unique $g(x)\in\mathrm{cl}\mathrm{B}(y_{0},r)$} such that $f(g(x),x)=f(y_{0},x_{0})$.
\end{theorem}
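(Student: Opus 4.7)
The plan is to refine the Banach fixed-point argument of the previous theorem by tracking quantitative bounds from the $C^2$-hypothesis, rather than relying on the qualitative statement that $D_y\varphi \to 0$. Set $T:=D_yf(y_0,x_0)$, $\omega_0:=f(y_0,x_0)$, and $\varphi(y,x):=y-T^{-1}(f(y,x)-\omega_0)$, so that $\varphi(y,x)=y$ iff $f(y,x)=\omega_0$. Observe that $D_y\varphi=I-T^{-1}D_yf$, $D_x\varphi=-T^{-1}D_xf$, $D_y^2\varphi=-T^{-1}D_y^2f$, and $D_xD_y\varphi=-T^{-1}D_xD_yf$. Since $D_y\varphi(y_0,x_0)=0$, the strategy is to bound $\|D_y\varphi\|$ and $\|D_x\varphi\|$ on the relevant balls and then invoke the Banach Fixed-Point Theorem on $\mathrm{cl}\mathrm{B}(y_0,r)$ uniformly in $x\in\mathrm{B}(x_0,\epsilon)$.

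First I would apply the Fundamental Theorem of Calculus to write
\[D_y\varphi(y,x_0)=\int_0^1 D_y^2\varphi((1-s)y_0+sy,x_0)(y-y_0)\,\mathrm{d}s,\]
yielding $\|D_y\varphi(y,x_0)\|\leq M_yK_{yy}\|y-y_0\|$ for $y\in\mathrm{B}(y_0,R_1)$. Choosing $r\leq P=\min\{1/(2M_yK_{yy}),R_1\}$ makes this $\leq 1/2$, so the Mean Value Inequality gives $\|\varphi(y,x_0)-y_0\|\leq r/2$ on $\mathrm{cl}\mathrm{B}(y_0,r)$. Similarly, writing
\[D_x\varphi(y,x)=D_x\varphi(y_0,x_0)+\int_0^1 D_yD_x\varphi((1-s)y_0+sy,x_0)(y-y_0)\,\mathrm{d}s+\int_0^1 D_x^2\varphi(y,(1-s)x_0+sx)(x-x_0)\,\mathrm{d}s,\]
I obtain $\|D_x\varphi(y,x)\|\leq M_y(L_x+K_{xy}\|y-y_0\|+K_{xx}\|x-x_0\|)$ on $\mathrm{B}((y_0,x_0),R_2)$. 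The Mean Value Inequality then yields
\[\|\varphi(y,x)-\varphi(y,x_0)\|\leq M_y(L_x+K_{xy}r+K_{xx}\epsilon)\epsilon,\]
valid provided $\epsilon\leq\sqrt{R_2^2-r^2}$ so that the segment $(y,(1-t)x_0+tx)$ stays in $\mathrm{B}((y_0,x_0),R_2)$.

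Combining these two estimates via triangle inequality gives $\|\varphi(y,x)-y_0\|\leq r/2+M_y(L_x+K_{xy}r+K_{xx}\epsilon)\epsilon$, and the hypothesis $M_y(L_x+K_{xy}r+K_{xx}\epsilon)\epsilon<r/2$ guarantees the right-hand side is less than $r$, so $\varphi(\cdot,x)$ maps $\mathrm{cl}\mathrm{B}(y_0,r)$ into itself for every $x\in\mathrm{B}(x_0,\epsilon)$. To upgrade this from the Brouwer-style argument to a \emph{unique} fixed point, I establish the contraction property: from $D_y\varphi(y_0,x_0)=0$ and the second-derivative bounds,
\[\|D_y\varphi(y,x)\|\leq M_yK_{yy}\|y-y_0\|+M_yK_{xy}\|x-x_0\|\leq \tfrac{1}{2}+\tfrac{1}{4}=\tfrac{3}{4}\]
whenever $r\leq P$ and $\epsilon\leq 1/(4M_yK_{xy})$. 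Hence $\varphi(\cdot,x)$ is a $(3/4)$-contraction of the complete metric space $\mathrm{cl}\mathrm{B}(y_0,r)$ into itself, and the Banach Fixed-Point Theorem produces a unique $g(x)\in\mathrm{cl}\mathrm{B}(y_0,r)$ with $\varphi(g(x),x)=g(x)$, equivalently $f(g(x),x)=\omega_0$.

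The main bookkeeping obstacle is ensuring the compatibility of all the domain constraints: $r\leq R_1$ guarantees the $K_{yy}$ bound is valid; $\sqrt{r^2+\epsilon^2}\leq R_2$ keeps us inside the ball where $K_{xx},K_{xy}$ are valid; $r\leq 1/(2M_yK_{yy})$ controls the $y$-contraction at $x_0$; $\epsilon\leq 1/(4M_yK_{xy})$ preserves contraction uniformly in $x$; and the self-map inequality $M_y(L_x+K_{xy}r+K_{xx}\epsilon)\epsilon<r/2$ ties $\epsilon$ to $r$ quadratically. Once all five inequalities are simultaneously enforced as in the statement, the existence and uniqueness follow from a single application of the Banach Fixed-Point Theorem, completing the proof.
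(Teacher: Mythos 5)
Your proposal is correct and follows essentially the same route as the paper's proof: the same auxiliary map $\varphi(y,x)=y-T^{-1}(f(y,x)-\omega_0)$, the same two Fundamental-Theorem-of-Calculus expansions giving $\Vert D_{y}\varphi(y,x_{0})\Vert\leq M_{y}K_{yy}\Vert y-y_{0}\Vert$ and $\Vert D_{x}\varphi(y,x)\Vert\leq M_{y}(L_{x}+K_{xy}\Vert y-y_{0}\Vert+K_{xx}\Vert x-x_{0}\Vert)$, the same self-map estimate, and the same $3/4$-contraction bound $M_{y}K_{yy}r+M_{y}K_{xy}\epsilon\leq\frac{1}{2}+\frac{1}{4}$ feeding into the Banach Fixed-Point Theorem. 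You also correctly identify the role of the extra constraint $\epsilon\leq\frac{1}{4M_{y}K_{xy}}$ in upgrading mere existence to uniqueness, exactly as the paper does.
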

\begin{proof}
Using the Fundamental Theorem of Calculus, we have 
\begin{align*}
D_{y}\varphi(y,x_{0})&=\int_{0}^{1}D_{y}^{2}\varphi((1-s)y_{0}+sy,x_{0})(y-y_{0})\mathrm{d}s\\
&=-T^{-1}\int_{0}^{1}D_{y}^{2}f((1-s)y_{0}+sy,x_{0})(y-y_{0})\mathrm{d}s.\\
\end{align*}
Thus\[\Vert D_{y}\varphi(y,x_{0})\Vert\leq M_{y}K_{yy}\Vert y-y_{0}\Vert,\ \forall y\in\mathrm{B}(y_{0},R_{1}),\] by noting $T=D_{y}f(y_{0},x_{0})$ as in the previous proof.
From the Mean Value Inequality, 
\begin{align*}
\Vert\varphi(y,x_{0})-y_{0}\Vert &=\Vert\varphi(y,x_{0})-\varphi(y_{0},x_{0})\Vert\\
&\leq\max_{0\leq t\leq 1}\Vert D_{y}\varphi((1-t)y_{0}+ty,x_{0})\Vert\Vert y-y_{0}\Vert\\
&\leq M_{y}K_{yy}\Vert y-y_{0}\Vert\Vert y-y_{0}\Vert\\
&\leq M_{y}K_{yy}P\Vert y-y_{0}\Vert\\
&\leq\frac{1}{2}\Vert y-y_{0}\Vert, \forall y\in\mathrm{B}(y_{0},P),\\
\end{align*}
where \[P:=\min\left\{\frac{1}{2M_{y}K_{yy}},R_{1}\right\}.\] Using again the Fundamental Theorem of Calculus we get
\begin{align*}
D_{x}\varphi(y,x)&=D_{x}\varphi(y,x_{0})+\int_{0}^{1}D_{x}^{2}\varphi(y,(1-s)x_{0}+sx)(x-x_{0})\mathrm{d}s\\
&=D_{x}\varphi(y_{0},x_{0})+\int_{0}^{1}D_{y}D_{x}\varphi((1-s)y_{0}+sy,x_{0})(y-y_{0})\mathrm{d}s\\
&+\int_{0}^{1}D_{x}^{2}\varphi(y,(1-s)x_{0}+sx)(x-x_{0})\mathrm{d}s.\\
\end{align*}
By triangle inequality and submultiplicative property of induced norms we have
\begin{align*}
\Vert D_{x}\varphi(y,x)\Vert&\leq \Vert D_{x}\varphi(y_{0},x_{0})\Vert+\int_{0}^{1}\Vert D_{y}D_{x}\varphi((1-s)y_{0}+sy,x_{0})\Vert\Vert(y-y_{0})\Vert\mathrm{d}s\\
&+\int_{0}^{1}\Vert D_{x}^{2}\varphi(y,(1-s)x_{0}+sx)\Vert\Vert(x-x_{0})\Vert\mathrm{d}s\\
&\leq M_{y}(L_{x}+K_{xy}\Vert y-y_{0}\Vert+K_{xx}\Vert x-x_{0}\Vert),\ \forall(y,x)\in\mathrm{B}((y_{0},x_{0}),R_{2}).\\
\end{align*}
From the Mean Value Inequality, 
\begin{align*}
\Vert\varphi(y,x)-\varphi(y,x_{0})\Vert&\leq\max_{0\leq t\leq 1}\Vert D_{x}\varphi(y,(1-t)x_{0}+tx)\Vert\Vert x-x_{0}\Vert\\
&\leq M_{y}(L_{x}+K_{xy}\Vert y-y_{0}\Vert+K_{xx}\Vert x-x_{0}\Vert)(\Vert x-x_{0}\Vert),\ (y,x)\in\mathrm{B}((y_{0},x_{0}),R_{2}).\\
\end{align*}
For a given $r\leq\min\{P,R_{2}\}$, we need the inequality \[\Vert\varphi(y,x)-\varphi(y,x_{0})\Vert\leq\frac{r}{2}\] hold for all {$y\in\mathrm{cl}\mathrm{B}(y_{0},r)$}, and all $x\in\mathrm{B}(x_{0},\epsilon)$. Requring\[M_{y}(L_{x}+K_{xy}r+K_{xx}\epsilon)\epsilon\leq\frac{r}{2}\] will suffice. One can solve the inequality above to get a respective {$$\epsilon \leq \min\left\{ \sqrt{R_{2}^{2}-r^2},\frac{1}{4M_{y}K_{xy}}\right\}$$} for a given $r\leq\min\{P,R_{2}\}$.{
\begin{align*}
D_{y}\varphi(y,x)&=D_{y}\varphi(y,x_{0})+\int_{0}^{1}D_{x}D_{y}\varphi(y,(1-s)x_{0}+sx)(x-x_{0})\mathrm{d}s\\
&=D_{y}\varphi(y_{0},x_{0})+\int_{0}^{1}D_{y}^{2}\varphi((1-s)y_{0}+sy,x_{0})(y-y_{0})\mathrm{d}s\\
&+\int_{0}^{1}D_{x}D_{y}\varphi(y,(1-s)x_{0}+sx)(x-x_{0})\mathrm{d}s,\ \forall y\in\mathrm{cl}\mathrm{B}(y_{0},r).
\end{align*}
Noting $D_{y}\varphi(y_{0},x_{0})=0$, we have
\begin{align*}
\Vert D_{y}\varphi(y,x)\Vert&\leq \int_{0}^{1}\Vert D_{y}^{2}\varphi((1-s)y_{0}+sy,x_{0})\Vert\Vert(y-y_{0})\Vert\mathrm{d}s\\
&+\int_{0}^{1}\Vert D_{x}D_{y}\varphi(y,(1-s)x_{0}+sx)\Vert\Vert(x-x_{0})\Vert\mathrm{d}s\\
&\leq M_{y}K_{yy}\Vert y-y_{0}\Vert+M_{y}K_{xy}\Vert x-x_{0}\Vert\\
&\leq\frac{3}{4},\ \forall y\in\mathrm{cl}\mathrm{B}(y_{0},r),\ \forall x\in\mathrm{B}(x_{0},\epsilon).
\end{align*}
From the Mean Value Inequality,\[\Vert\varphi(y_{1},x)-\varphi(y_{2},x)\Vert\leq\max_{0\leq t\leq 1}\Vert D_{y}\varphi((1-t)y_{1}+ty_{2},x)\Vert\Vert y_{1}-y_{2}\Vert\leq\frac{3}{4}\Vert y_{1}-y_{2}\Vert,\]for $y_{1}$, $y_{2}\in\mathrm{cl}\mathrm{B}(y_{0},r)$ and $x\in\mathrm{B}(x_{0},\epsilon)$.
}
\end{proof}

Combining Theorem \ref{Imp1} and our setting about the central configurations, we have immediately 

\begin{theorem}\label{Thm4.1}
Let $F=[F_1, F_2,F_3,F_4]=[f_{12}, f_{13},f_{14}, f_{23}]$. Then $F$ is a map from $\mathbf{U}\times(\mathbb{R^+})^3\owns (x,m)$ to $\mathbb{R}^4$ where the admissible region $\mathbf{U}\subset\mathbb{R}^4$ is defined in Theorem \ref{Theorem1}. Since $m_2$ is fixed to be $1$  as  the largest value once and for all,  the mass vector $m\in (\mathbb{R^+}^3)$ is then $m=[m_1,m_3,m_4]\in (0,1]^3$. Let $x_0\in \mathbf{U}$ be a central configuration for $m_0$ such that $F(x_0,m_0)=0$ and $D_xF(x_0,m_0)$ is nonsingular. Define
$$L_m:=\| D_mF(x_0,m_0)\|,  \hspace{0.5cm} M_x:=\| (D_xF (x_0,m_0))^{-1}\| $$
and for given $R_1>0, R_2 >0$, set 
$$K_{xx}:= sup \{\|D_x^2 F(x,m_0)\| | x\in B(x_0,R_1)\},$$
$$K_{mm}:= sup \{\|D_x^2 F(x,m)\| | (x, m) \in B((x_0,m_0),R_2)\},$$ and
$$K_{xm}:= sup \{\|D_xD_m F(x,m)\| | (x, m) \in B((x_0,m_0),R_2)\},$$
Set $P: =\min \left\{ \frac{1}{2 M_x K_{xx}}, R_1 \right\}.$ Then for all $r\leq \min\{P,R_2\}$ and $\epsilon\leq \min\{\sqrt{R_2^2-r^2},\frac{1}{4M_{x}K_{xm}}\}$ satisfying
\begin{equation}\label{ineq1}
M_x (L_m +K_{xm} r+K_{mm} \epsilon) \epsilon \leq r/2,
\end{equation}
$F(x,m)=0$ has a unique solution on $B(x_0, r)$ for any given $m\in B(m_0, \epsilon)$. 
\end{theorem}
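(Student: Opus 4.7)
The plan is to obtain Theorem \ref{Thm4.1} as an immediate specialization of Theorem \ref{Imp1}, with the position variable $x$ playing the role of $y$ and the mass parameter $m$ playing the role of $x$. Under this dictionary the dimensions become $n_1 = 4$ (the number of position unknowns $(x_1, y_1, x_3, y_3)$ and of scalar equations in $F$) and $n_2 = 3$ (the free mass components $(m_1, m_3, m_4)$ after normalizing $m_2 = 1$), while each constant in Theorem \ref{Thm4.1} corresponds to one in Theorem \ref{Imp1} under the identifications $L_m \leftrightarrow L_x$, $M_x \leftrightarrow M_y$, $K_{xx} \leftrightarrow K_{yy}$, $K_{mm} \leftrightarrow K_{xx}$, $K_{xm} \leftrightarrow K_{xy}$, and $P \leftrightarrow P$; the scalar inequality \eqref{ineq1} is literally the same inequality that appears in Theorem \ref{Imp1}.

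First I would verify that the hypotheses of Theorem \ref{Imp1} are in force on a suitable open product neighborhood $Y \times X$ of $(x_0, m_0)$. Theorem \ref{Theorem1} confines the admissible positions to $\mathbf{U}$, on which all six mutual distances $r_{ij}$ are uniformly bounded below by positive constants; consequently the denominators in the explicit formulas for $f_{12}, f_{13}, f_{14}, f_{23}$ never vanish, and $F$ is real-analytic, in particular $C^{2}$, throughout such a neighborhood. This makes each of the five constants $L_m, M_x, K_{xx}, K_{mm}, K_{xm}$ finite whenever $R_{1}, R_{2}$ are taken small enough that $B(x_0, R_1) \subset Y$ and $B((x_0, m_0), R_2) \subset Y \times X$. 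The standing hypothesis that $D_x F(x_0, m_0)$ is nonsingular supplies the remaining ingredient required by Theorem \ref{Imp1}, namely invertibility of the partial derivative with respect to the implicit variable.

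With the hypotheses verified, I would then apply Theorem \ref{Imp1} directly to conclude that for every $r \le \min\{P, R_2\}$ and every $\epsilon \le \min\{\sqrt{R_2^2 - r^2},\, 1/(4 M_x K_{xm})\}$ satisfying \eqref{ineq1}, there is a unique $g(m) \in \mathrm{cl}\mathrm{B}(x_0, r)$ with $F(g(m), m) = F(x_0, m_0) = 0$ for every $m \in B(m_0, \epsilon)$. Since $F(\cdot, m)$ has no other zero in the closed ball, it has none in the open ball $B(x_0, r)$ either, which is exactly the claim of the theorem.

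The derivation is essentially bookkeeping; the substantive difficulty lies on either side of the theorem rather than in its proof. Upstream, one must produce concrete and numerically effective upper bounds for $\|D_x^2 F\|$, $\|D_m^2 F\|$ and $\|D_x D_m F\|$ on a ball of nontrivial radius around $(x_0, m_0)$, because these bounds govern how large $\epsilon$ may be chosen and therefore how many uniqueness mass balls are needed to cover the cube $[0.6, 1]^3$. Downstream, one must check that no other convex central configuration in the prescribed cyclic order lies in $\bar{\mathbf{U}} \setminus B(x_0, r)$ for any $m \in B(m_0, \epsilon)$, which is precisely Process 3 of the flow chart in Figure \ref{FlowChart}. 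I expect the principal practical obstacle to be producing second-derivative estimates tight enough that \eqref{ineq1} yields an $\epsilon$ of computationally useful size.
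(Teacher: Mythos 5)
Your proposal is correct and follows exactly the paper's route: the paper obtains Theorem \ref{Thm4.1} as an immediate specialization of Theorem \ref{Imp1} under precisely your dictionary ($y\leftrightarrow x$, $x\leftrightarrow m$, $M_y\leftrightarrow M_x$, $K_{yy}\leftrightarrow K_{xx}$, etc.), introducing it with the phrase ``Combining Theorem \ref{Imp1} and our setting about the central configurations, we have immediately'' and offering no further argument. Your explicit verification of the $C^{2}$ hypothesis and the finiteness of the constants on $\mathbf{U}$ is in fact more detail than the paper itself records, and the only residual wrinkle --- that Theorem \ref{Imp1} places the unique solution in the closed ball $\mathrm{cl}\mathrm{B}(x_0,r)$ while Theorem \ref{Thm4.1} asserts it on $B(x_0,r)$ --- is an imprecision already present in the paper's own statements, not one you introduced.
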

\begin{remark}
  In our computation program, in order to have a large existence and uniqueness neighborhood about $(x_0,m_0)$, we    choose $r=\min\{P, R_2\}$ and $\epsilon$ be the maximum value satisfying the inequality \eqref{ineq1} or we take
  \begin{equation}
      \epsilon=\min\left\{\frac{-M_x(L_m+K_{xm}r)+\sqrt{(M_x(L_m+K_{xm}r))^2+2M_xK_{mm}r}}{2M_xK_{mm}},
      \sqrt{R_2^2-r^2},\frac{1}{4M_{x}K_{xm}}\right\}.
  \end{equation} These two values strongly depend on $R_1$ and $R_2$.
\end{remark}
We give an example to illustrate our computation program to show that there exists a unique convex central configuration for any masses in the corresponding $\epsilon$-neighborhood. 
\begin{example}\label{ex5}
Let $x_0$ be the unique convex central configuration for mass $m_0$  with $m_0=[0.2,1,$ $0.3,$ $0.4]$ and $x_0=[x_{10},y_{10},x_{30},y_{30}]$ as given in Example \ref{ex4}. To compute $r$ and $\epsilon$, we take 
$$x_{10}=0.1538532870752166, 
y_{10}=1.4086619698548151,  $$ $$
x_{30}=0.1161115842885355,
y_{30}=-1.4848782026467043.$$ 
Let $R_1=0.1$ and $R_2=0.2$. By direct computation, $M_x=3.193848,$ $L_m=0.591292$, $K_{xx}=1.880567$, $K_{xm}=1.9330632$, $K_{mm}=2.160289$. Then $r= 0.083247$ and  $\epsilon=0.016540.$
Theorem \ref{Thm4.1} affirms that there exists a unique solution  $x=x(m)$ in $x\in B(x_0,r)$ for $m\in B(m_0,\epsilon).$ 

If we choose $R_1=0.2$ and $R_2=0.2$, then we have $r=0.066053,$ and $\epsilon=0.013632.$ This illustrates that the values of $r$ and $\epsilon$  depend on the choices of $R_1$ and $R_2$. In our computations, we take $R_1=0.1$ and $R_2=0.2$ to have a larger $\epsilon$.

{
To complete the proof of the uniqueness of the convex central configuration for any $m\in B(m_0,\epsilon)$, it is necessary to prove that any $x\in \bar{\mathbf{U}}\backslash B(x_0,r)$ is not a solution of the central configuration equations for this $m$. We have to change the balls to intervals in order to use interval arithmetic. Let $$\mathcal{I}(m_0,\epsilon)=[m_{10}-\epsilon, m_{10}+\epsilon]\times [m_{30}-\epsilon, m_{30}+\epsilon]\times[m_{40}-\epsilon, m_{40}+\epsilon]$$ be a mass interval centered at $m_0$ with radius $\epsilon$.  Let $$\mathcal{I}(x_0,r)=[x_{10}-r, x_{10}+r]\times [y_{10}-r, y_{10}+r]\times [x_{30}-r, x_{30}+r]\times [y_{30}-r, y_{30}+r]$$ be a position interval centered at $x_0$ with radius $r$. Then $$\mathcal{I}\left(x_0, \frac{r}{2}\right)\subseteq B(x_0,r)$$ and $$\mathcal{I}\left(m_0, \frac{\epsilon}{\sqrt{3}}\right)\subseteq B(m_0,\epsilon)\subseteq \mathcal{I}(m_0,\epsilon).$$ It is easy to prove the following lemma and its proof is omitted.
\begin{lemma}\label{lemmaU}
 Suppose there exists a unique solution  $x=x(m)$ in $x\in B(x_0,r)$ for $m\in B(m_0,\epsilon).$ If any $x\in \bar{\mathbf{U}}\backslash \mathcal{I}(x_0,\frac{r}{2})$ is not a solution for any $m \in \mathcal{I}(m_0,\epsilon)$, then there exists a unique solution $x=x(m)$ in $\mathbf{U}$ for $m \in B(m_0,\epsilon)$. If any $x\in \bar{\mathbf{U}}\backslash \mathcal{I}(x_0,\frac{r}{2})$ is not a solution for any $m \in \mathcal{I}(m_0,\frac{\epsilon}{\sqrt{3}})$, then there exists a unique solution $x=x(m)$ in $\mathbf{U}$ for $m \in \mathcal{I}(m_0,\frac{\epsilon}{\sqrt{3}})$.
\end{lemma}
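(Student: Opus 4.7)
The plan is a straightforward set-theoretic containment argument, reducing uniqueness in $\mathbf{U}$ to the closed-ball uniqueness already supplied by Theorem \ref{Thm4.1}. The substantive content is merely matching the interval language (natural for interval-arithmetic checks) with the ball language of IFT, via the two sandwich inclusions $\mathcal{I}(x_0, r/2) \subseteq \overline{B}(x_0, r)$ and $\mathcal{I}(m_0, \epsilon/\sqrt{3}) \subseteq B(m_0, \epsilon) \subseteq \mathcal{I}(m_0, \epsilon)$ recorded immediately before the lemma.

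For the first assertion I would fix an arbitrary $m \in B(m_0, \epsilon)$. Existence of a convex central configuration $x(m) \in \mathbf{U}$ is immediate from the IFT hypothesis once one observes that $r$ is taken small enough so that $B(x_0, r) \subseteq \mathbf{U}$: the strict inequalities defining $\mathbf{U}$ in Theorem \ref{Theorem1} all hold at $x_0$ and hence persist on a small neighborhood. For uniqueness, suppose $x' \in \mathbf{U}$ also satisfies $F(x', m) = 0$. Since $\mathbf{U} \subseteq \bar{\mathbf{U}}$ and $m \in B(m_0, \epsilon) \subseteq \mathcal{I}(m_0, \epsilon)$, the standing hypothesis excludes $x' \in \bar{\mathbf{U}} \setminus \mathcal{I}(x_0, r/2)$, forcing $x' \in \mathcal{I}(x_0, r/2) \subseteq \overline{B}(x_0, r)$; the closed-ball uniqueness in Theorem \ref{Thm4.1} then yields $x' = x(m)$. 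The second claim runs the same argument with $m$ restricted to $\mathcal{I}(m_0, \epsilon/\sqrt{3}) \subseteq B(m_0, \epsilon)$, so that the no-outside-CC hypothesis only needs to have been checked on the smaller mass box.

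There is no real obstacle; the proof is essentially bookkeeping. The one point deserving a little care is dimensional: in $\mathbb{R}^4$ the corners of the box $\mathcal{I}(x_0, r/2)$ lie at Euclidean distance exactly $\sqrt{4(r/2)^2} = r$ from $x_0$, so the inclusion $\mathcal{I}(x_0, r/2) \subseteq B(x_0, r)$ written in the excerpt should be read with the closed ball, which is consistent with the closed-ball form of uniqueness delivered by Theorem \ref{Thm4.1}. The analogous sharp-corner computation in $\mathbb{R}^3$ gives $\mathcal{I}(m_0, \epsilon/\sqrt{3}) \subseteq B(m_0, \epsilon)$, and the componentwise bound $|m_i - m_{i0}| \leq \|m - m_0\| < \epsilon$ gives $B(m_0, \epsilon) \subseteq \mathcal{I}(m_0, \epsilon)$. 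Once these inclusions are in hand, both assertions fall out in a line.
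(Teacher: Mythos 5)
Your overall reduction is the intended one (the paper omits this proof as routine), and the uniqueness half of your argument is correct: any solution $x'\in\mathbf{U}\subseteq\bar{\mathbf{U}}$ for $m\in B(m_0,\epsilon)\subseteq\mathcal{I}(m_0,\epsilon)$ is forced by the exclusion hypothesis into $\mathcal{I}(x_0,\frac{r}{2})\subseteq \mathrm{cl}\mathrm{B}(x_0,r)$, where the closed-ball uniqueness of Theorem~\ref{Imp1}/\ref{Thm4.1} applies; your corner computations ($\sqrt{4}\cdot\frac{r}{2}=r$ in $\mathbb{R}^4$, $\sqrt{3}\cdot\frac{\epsilon}{\sqrt{3}}=\epsilon$ in $\mathbb{R}^3$, both landing on the \emph{closed} ball) are the right point to make explicit.

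The existence half, however, rests on a step that fails as written. You claim $B(x_0,r)\subseteq\mathbf{U}$ because ``the strict inequalities defining $\mathbf{U}$ hold at $x_0$ and persist on a small neighborhood.'' Two problems: first, $r$ is not free to be shrunk here --- it is fixed by the IFT constants ($r=\min\{P,R_2\}$, typically of order $0.08$), so there is no guarantee it is small enough for the strict inequalities to persist when $x_0$ sits near $\partial\mathbf{U}$. Second, and decisively, $\mathbf{U}$ is not open: it contains the constraint $x_1x_3\geq 0$, and for the equal-mass grid point one has $x_0=(0,1,0,-1)$, so \emph{no} ball around $x_0$ lies in $\mathbf{U}$ (every ball contains points with $x_1>0>x_3$). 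Hence $x(m)\in B(x_0,r)$ does not by itself give a solution in $\mathbf{U}$. The repair is to route existence through MacMillan--Bartky: for any $m$ with positive masses there is a convex central configuration $x^*$, which after the normalization of Theorem~\ref{Theorem1} lies in $\mathbf{U}\subseteq\bar{\mathbf{U}}$; the exclusion hypothesis then places $x^*\in\mathcal{I}(x_0,\frac{r}{2})\subseteq\mathrm{cl}\mathrm{B}(x_0,r)$, and closed-ball uniqueness identifies $x^*$ with $x(m)$, so $x(m)\in\mathbf{U}$ after all. With that substitution both assertions of the lemma follow exactly as you describe.
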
 
To show that any $x\in \bar{\mathbf{U}} \backslash\mathcal{I}(x_0,\frac{r}{2})$ is not a solution for any $m \in \mathcal{I}(m_0,\epsilon)$, we first divide the interval $\bar{\mathbf{U}}\backslash\mathcal{I}(x_0,\frac{r}{2})$ into smaller ones $[x]$ and then we check that at least one of $f_{ij}([x],\mathcal{I}(m_0,\epsilon))$ doesn't contain zeros. We need to make sure that the union of $[x]$ covers the whole region out of the position ball. This shows that the mass ball $B(m_0,\epsilon)$ is a uniqueness mass ball. 
In Example \ref{ex5}, $\mathcal{I}(m_0,\epsilon)= [0.183460297345220, 0.216539702654780] \times $ \\
$[0.283460297345220, 0.316539702654780]\times [0.383460297345220, 0.416539702654780]$ \\
and $\mathcal{I}_{x_0}=[0, 0.112229923080813]\times$  $[0.268, 1.733]\times $ $[0, 1]\times $ $[-1.733, -0.268]$ is one of the eight intervals in the closure of $\bar{\mathbf{U}}\backslash\mathcal{I}(x_0,\frac{r}{2})$. But the six functions $f_{ij}(\mathcal{I}_{x_0},\mathcal{I}(m_0,\epsilon)) $ all contain zeros. We split the position interval  $\mathcal{I}_{x_0}$ into two subintervals by bisecting the longest sides and then check whether $f_{ij}$ contains zeros on the subintervals. Repeat the process until it proves that at least one of $f_{ij}$ doesn't contain zeros in the subintervals. By this way, it confirms that $B(m_0,\epsilon)$ is a uniqueness mass ball in $\mathbf{U}$.}

Note that one may worry that there exists an $m$ in  $ B(m_0,\epsilon)$ near boundary whose solution $x$ is also near the boundary of $B(x_0,r)$. Then such $x$ falls in $\bar{\mathbf{U}}\backslash\mathcal{I}(x_0,\frac{r}{2})$. If such case exists, our method can not confirm the uniqueness of $B(m_0,r)$. But we can apply our method to a smaller mass intervals $\mathcal{I}(m_0,\frac{\epsilon}{N})\subseteq B(m_0, \epsilon) $ for an appropriate $N\geq\sqrt{3}$ and prove $B(m_0,\frac{\epsilon}{N})$ is a uniqueness mass ball.  Fortunately, this case doesn't occur in our program. All mass balls $B(m_0,\epsilon)$ in our program are uniqueness mass balls. To accelerate the numerical computations, we can produce the uniqueness mass interval $\mathcal{I}(m_0,\frac{\epsilon}{\sqrt{3}})$  to cover the mass space by checking that at least one of $f_{ij}([x],\mathcal{I}(m_0,\frac{\epsilon}{\sqrt{3}}))$ doesn't contain zeros for all small intervals $[x]$ out of the corresponding position ball.




\end{example}

\section{Proof of the Theorem \ref{MainMainTheorem}}\label{sec5}

To prove the uniqueness of convex central configurations in Theorem \ref{MainMainTheorem}, we  create finite many uniqueness mass balls $B(m_0, \epsilon)$ so that the union of such balls can cover the whole mass space $[0.6,1]^3$. Here we describe the procedures to produce the uniqueness mass balls.  

Step 1: Divide the mass space $[0.6,1]^3$ into a grid with equal side length $[0.6, 1]$ for $M_1$ segments, such that $m_{ik}=0.6+\frac{0.4k}{M_1}$ for $i=1,3,4$ and $k=0,1,2,...,M_1$. The value of $M_1$ is determined based on experimental results. We choose $M_1=40.$

Step 2: {For each $m_0=[m_{1{k_1}}, 1, m_{3{k_3}},m_{4{k_4}}]$ at a meshed grid point in Step 1,} applying the algorithm  (interval arithmetic computation and Krawczyk operator) in Section 3, we can prove there is a unique central configuration $x_0$. {Now the proof of Theorem \ref{MainTheorem} is complete.}

Step 3: For such central configuration $x_0$ for $m_0$, applying the implicit function theorem in Section 4, we can find a uniqueness mass ball $B(m_0, \epsilon)$, where $\epsilon$ is a small positive number such that the central configuration is unique for all masses in the ball $B(m_0, \epsilon)$ as guaranteed by Theorem \ref{Thm4.1} and Lemma \ref{lemmaU}. 

Step 4: Check whether the union of all the uniqueness balls covers the mass space $[0.6,1]^3$. This follows from the fact that the compact mass space $[0.6,1]^3$ can be covered by a finitely many uniqueness mass balls and the fact that we can refine our mesh points so that the uniqueness balls  overlap ensuring that any mass point is in at least one uniqueness ball. 

Therefore, by Step 2 and Step 3, there exists a unique central configuration for any mass point in the meshed space, and by Step 4, this uniqueness extends to the entire mass space $[0.6,1]^3$. Thus, Theorem \ref{MainMainTheorem} is proven.

Initially, set $M_1=5$. We find that the radius $\epsilon$ of the uniqueness mass ball  is between $0.0092119$ and $0.016393$ with an overall  average radius about $0.01258436$. If the minimum of the radius of the uniqueness mass balls is $\epsilon_0$, we can cover the whole mass space by choosing $M_1> \frac{4\sqrt{3}}{20\epsilon_0} $ since a uniqueness ball $B(m_0, \epsilon_0)$ contains an interval centered at $m_0$ with each side length $\frac{2\sqrt{3}\epsilon_0}{3}.$ If $\epsilon_0=0.009$, $M_1>38$. In our program, we use $M_1=40$ and totally $64000$ uniqueness mass balls are computed to cover the whole mass space $[0.6,1]^3$. {According to Lemma \ref{lemmaU}, our program confirms the uniqueness of the mass ball $B(m_0,\epsilon)$   for each mass $m_0$ at the mesh grid in Step 1.  }

\section{Conclusions and Remarks}\label{sec6}

By combining the interval analysis and implicit function theorem, we give a computer assisted proof about the uniqueness of convex central configuration for any given mass vector $m\in [0.6, 1]^3$ in a fixed order in the planar $4$-body problem. This closed domain can be enlarged and improved further without modifying our method and program. So the conjecture is hopefully true for all masses except for those near the mass vector with one or more component being zeros.

However we find that it becomes more and more time consuming when we approach the boundary of the mass space because $\epsilon$ becomes smaller and smaller. This suggests the potential singular behavior when we take the limit to the boundary of the mass space.

For example, let's consider the mass space $([0.1,1])^3$ and let $M_1=10$. We determine that the radius $\epsilon$ of the uniqueness mass ball lies between $0.0001436$ and $0.02125$, with an average radius of approximately $0.00998445$. By using the minimum radius $\epsilon_0=0.0001436$, we can cover the entire mass space with $M_1> 5428$ and a total number of uniqueness mass balls equal to approximately $1.6\times 10^{11}$. It will take a rather long time to finish the computation using the current program, and it seems unpractical without further refinement of our current program.

Reflecting further on our methods, we find the following mathematical problems may be interesting. (1)  Given a function $f$ on a closed domain $D\subset \mathbb{R}^k$, for each point $q\in D$, we can find an open ball $B(q,r_q)$ with radius depending on the point $q$ and some property of the function $f$ (e.g. the valid domain for implicit function theorem in the current paper). In such a way, we get an open covering of the compact set $D$, and in principle we can select a finite number of them to cover $D$. Now the question is whether one can find an optimal one as in sphere packing problem. Here we are deliberately vague to allow refinements and variations. (2) Even more interesting problem would be trying to refine and optimize the size of the valid domain for the implicit function theorem with or without the conditions about the second order derivatives of the mapping. There is very few literature concerning this fundamental topic, {and we leave to future research}.

Yet from this perspective we also notice the following interesting result establishing the conjecture
about the uniqueness of the convex central configurations for masses $m_1,m_3,m_4$ in the following form near zeros in a given order. 

\begin{lemma}(\cite{CCLM}, Theorem 2)
    Given $\mu_i>0 (i=1,3,4)$, there exists $\epsilon_0>0$ such that for $0<\epsilon <\epsilon_0,$ the four masses $m=[\epsilon\mu_1,$ $ 1, $ $\epsilon \mu_3, $ $\epsilon \mu_4]$ admit a unique convex central configuration for a given order. 
\end{lemma}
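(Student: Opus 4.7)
The strategy is a singular perturbation analysis of the Dziobek--Laura--Andoyer equations $(2.1)$ combined with the implicit function theorem in the spirit of Section $4$. Place $q_{2}$ at the origin. For each light body $m_{i}=\epsilon\mu_{i}$ with $i\in\{1,3,4\}$, the mutual interactions among the light bodies are $O(\epsilon)$, and the center of mass collapses to $q_{2}$ as $\epsilon\to 0$. At leading order the central configuration equation $\sum_{j\neq i}m_{j}(q_{j}-q_{i})/r_{ij}^{3}=-\lambda(q_{i}-c)$ reduces to $(q_{2}-q_{i})/r_{2i}^{3}=-\lambda(q_{i}-q_{2})$, which forces the common radius $r_{21}=r_{23}=r_{24}=\lambda^{-1/3}$ but leaves all three angular positions free. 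Thus the $\epsilon=0$ limit set is, after modding by the $SO(2)$ symmetry, a two-parameter family of configurations with $m_{2}$ at the center of a circle on which the three light bodies can be placed arbitrarily.

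To pin down the angles I would expand $q_{i}=\rho_{*}e^{\mathrm{i}\theta_{i}^{(0)}}+\epsilon\,\xi_{i}+O(\epsilon^{2})$ and extract the $O(\epsilon)$ contribution of the full system. The radial components determine the corrections $\xi_{i}$ as linear functions of the unknown angles, while the tangential components yield equations for the $\theta_{i}^{(0)}$ whose physical content is that the net tangential force on each light body (coming from the other two light bodies) must vanish. After using rotational invariance to fix one angle (e.g., requiring $m_{2}q_{4}$ to be horizontal as in Figure $1$), this becomes a two-variable reduced system with a clear interpretation as a relative-equilibrium condition for three mutually attracting light bodies constrained to a circle centered at $m_{2}$.

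The main obstacle, and the heart of the proof, is to show that this reduced angular system admits exactly one solution compatible with the prescribed convex cyclic order. I would try to realize the reduced equations as the critical-point condition of an explicit effective function on the open region of admissible ordered angles, and establish strict convexity of that function on its domain; failing convexity in closed form, an alternative is to apply the Perpendicular Bisector Theorem together with the admissible-region bounds of Theorem $2.4$ to exclude any additional critical point. Because the Jacobian of the reduced system is nonsingular at the distinguished solution, a straightforward IFT argument (exactly as in Theorem $4.3$, but now with $\epsilon$ as the perturbation parameter in place of $m$) produces a unique smooth branch $x(\epsilon)$ of convex central configurations for $\epsilon\in(0,\epsilon_{0})$.

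Finally, to promote this local uniqueness to global uniqueness in the prescribed ordering, I would argue by contradiction and compactness: if a second convex central configuration existed for a sequence $\epsilon_{n}\to 0^{+}$, then since the ratios $r_{ij}$ are uniformly bounded above and below in any strictly convex configuration (by Theorem $2.4$ and its accompanying bounds), a subsequence would converge to a second $\epsilon=0$ limit configuration with the prescribed cyclic order, contradicting the uniqueness of the reduced-system solution. Thus $\epsilon_{0}$ can be chosen so that for every $0<\epsilon<\epsilon_{0}$ the convex central configuration of the given order is unique.
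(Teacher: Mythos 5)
The paper does not actually prove this lemma: it is quoted verbatim from Corbera--Cors--Llibre--Moeckel \cite{CCLM} (their Theorem 2 on relative equilibria of the $(1+3)$-body problem), precisely because the $\epsilon_0$ there is not effective and so cannot be merged into the interval-arithmetic framework. Your proposal is therefore an attempt to reconstruct the cited proof, and its architecture --- let the three small masses tend to zero, reduce to three infinitesimal bodies on a circle about the large mass, solve the reduced angular system, and continue by the implicit function theorem --- is indeed the route taken in the $(1+3)$-body literature.

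There are, however, two genuine gaps. First, your reduced tangential equation is misstated: the limiting condition is not that the net tangential attraction from the other two light bodies vanishes. Because the center of mass is displaced from $q_2$ at order $\epsilon$, an indirect term survives in the limit, and the correct reduced system is the critical-point equation of Hall's potential $\sum_{j\neq i}\mu_j\bigl(\bigl(2-2\cos(\theta_i-\theta_j)\bigr)^{-1/2}-\cos(\theta_i-\theta_j)\bigr)$ (up to normalization), not of the bare Newtonian term. Second, and decisively, the entire content of the lemma is the uniqueness of the solution of this reduced angular system in the prescribed cyclic order, and you explicitly leave that step open, offering only two untested fallbacks (convexity of an effective function, or exclusion via the perpendicular bisector theorem together with the admissible-region bounds of Theorem \ref{Theorem1}). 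Neither is carried out, and neither is obviously workable: the effective potential is not convex on the whole angular domain, and the shape bounds constrain but do not isolate critical points; in \cite{CCLM} this uniqueness is itself a substantive theorem requiring a detailed analysis of the $(1+3)$-body bifurcation diagram. The surrounding steps --- nondegeneracy plus IFT continuation in $\epsilon$, and the compactness argument for global uniqueness (which does work here because $\mathbf{U}$ keeps all mutual distances bounded away from zero, so no light-body collisions occur in the limit) --- are sound in outline, but without the uniqueness of the reduced solution the proof does not close.
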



Now we cannot merge the statement into our framework because $\epsilon_0$
is theoretical and not given a concrete numerical value. Note also there are other possibilities to investigate for masses approaching zero. It seems still a challenging problem even numerically to verify the conjecture. 


The final remark is that even we can prove the conjecture with computer assistance, it is still desirable to have a conceptual/analytical proof.

{\bf Acknowledgements}

{We would like to thank the valuable suggestions from Professor Piotr Zgliczynski and the anonymous referee.}

Shanzhong Sun is partially supported by National Key R\&D Program
of China (2020YFA 0713300), NSFC (No.s 11771303, 12171327, 11911530092, 11871045).
Zhifu Xie is partially supported by Wright W. and Annie Rea Cross Endowment Funds at the University
of Southern Mississippi.




\end{document}